\let\c@author\relax
\newtheorem{lemma}{Lemma}
\newtheorem{theorem}{Theorem}
\newtheorem{remark}{Remark}
\newtheorem{assumption}{Assumption}
\newtheorem{proposition}{Proposition}
\definecolor{newred}{RGB}{229, 0, 76}
\definecolor{kthblue}{rgb}{0.098, 0.329, 0.651}
\begin{document}
\begin{frontmatter}
\title{
Optimal control of
continuous-time symmetric systems\\
with unknown dynamics and noisy measurements
\thanksref{footnoteinfo}}
\thanks[footnoteinfo]{
This work was supported by KTH Royal Institute of Technology.}
\author[ethz]{Hamed Taghavian}\ead{htaghavian@ethz.ch},
\author[ethz]{Florian Dorfler}\ead{dorfler@ethz.ch},
\author[kth]{Mikael Johansson}\ead{mikaelj@kth.se}
\address[kth]{KTH Royal Institute of Technology, Stockholm, Sweden}
\address[ethz]{ETH Zurich, Zurich, Switzerland}
\normalsize{\textbf{Link to the published version: \textcolor{kthblue}{\url{https://doi.org/10.1016/j.automatica.2025.112609}}}}

\begin{keyword}                           
Optimal control; Linear-quadratic regulation; Symmetric systems
\end{keyword}

\begin{abstract}
An iterative learning algorithm is presented for continuous-time linear-quadratic optimal control problems where the system is externally symmetric with unknown dynamics. Both finite-horizon and infinite-horizon problems are considered. It is shown that the proposed algorithm is globally convergent to the optimal solution and has some advantages over adaptive dynamic programming, including being unbiased under noisy measurements and having a relatively low computational burden. Numerical experiments show the effectiveness of the results.
\end{abstract}

\end{frontmatter}

\section{introduction}
    Continuous-time linear-quadratic regulation (LQR) is a classical problem in optimal control theory that has been studied for decades. This problem aims to minimize a quadratic cost subject to the dynamics of a linear system
    \begin{equation}\label{eqn:cost_intro}
    \begin{array}[c]{rll}
    \underset{u}{\text{minimize}} & \int_0^{t_f} x'(t)Q_x x(t)+u'(t)Ru(t)dt\\
    \mbox{subject to} & \frac{d}{dt}{x}(t)=Ax(t)+Bu(t), \quad t\geq 0
    \end{array}
    \end{equation}    
    where $x(t)\in\mathbb{R}^n$ is the state, $x(0)=x_0$ is the initial condition and $u(t)\in\mathbb{R}^m$ is the input. When the model $(A,B)$ is known, the optimal solution to the problem (\ref{eqn:cost_intro}) can be derived using dynamic programming as the state feedback control law
    \begin{subequations}
        \begin{align}
        u^{\star}(t)&=-K(t)x(t) \label{eqn:u=-Kx}\\
        K(t)&=R^{-1}B'P(t), \label{eqn:lqrGain_FH}
        \end{align}
    \end{subequations}
    where $P(t)$ is obtained from the associated Riccati differential equation
    \begin{align}\label{eqn:riccati_FH}
        -\frac{d}{dt}P(t)&=A'P(t)+P(t)A-P(t)BR^{-1}B'P(t)\nonumber\\
        &+Q_x, \quad P(t_f)=0
    \end{align}
    or alternatively, from a two-point boundary problem~\cite[\S3.3]{lewis}. Without access to the system model, however, solving the optimal control problem (\ref{eqn:cost_intro}) is non-trivial. One possible way to handle this problem is to first identify a model and then use it to solve the optimal control problem. This is referred to as the indirect approach. Alternatively, one can learn the optimal controller directly, by observing the input-output data while interacting with the system~\cite{dorfler2023}. This paper focuses on the latter direct approach, for which there are numerous methods available in discrete-time domain, such as policy gradient methods~\cite{faz2018}, behavioral approaches~\cite{deper2019}, and off-policy reinforcement learning~\cite{lopez2023efficient}. In contrast, the results for continuous-time problems as (\ref{eqn:cost_intro}) are much more limited. Hence, we focus on direct methods for the optimal control of continuous-time systems with unknown dynamics in this paper.

    \subsection{State-of-the-art}    
    Kleinman's algorithm sets a stepping stone towards solving the LQR problem (\ref{eqn:cost_intro}) without accessing the system model, by using a policy iteration. This algorithm starts from a state-feedback gain, improves this gain by ensuring a descent in the cost function at every iteration, and finally, converges to the optimal gain~(see \cite{kleinman_FH} for finite-horizon and \cite{kleinman_infH} for infinite-horizon cases). While this procedure still requires knowing the system model, it can be modified to solve the LQR problem in a model-free way. The early such results reformulate the policy evaluation step in Kleinman's algorithm (a cost-to-go matrix differential equation in the finite-horizon case and a Lyapunov equation in the infinite-horizon case) to only use a partial knowledge of the model. This has been demonstrated in~\cite{IET} and \cite{vrabie2009} for finite and infinite-horizon problems respectively. Later in \cite{lee2012,auto2012}, completely model-free algorithms emerged from Kleinman's algorithm that can solve the LQR problem without requiring the system model.

     \subsubsection{Infinite horizon problems}
     The model-free approaches mentioned above were further polished with improved computational complexity in~\cite{auto2016}, and eventually, a set of algorithms appeared that can solve infinite-horizon optimal control problems with unknown models based on adaptive dynamic programming~\cite{auto2022,TAC2019,trancyber2022,TAC2016}. Put simply, in these methods, an initial control input is applied to the system, and the state trajectory is recorded in a finite time interval. This recorded data is then used to update the estimated cost and gain repeatedly by solving a linear algebraic equation in each iteration $k\geq 0$ as follows
    \begin{equation}\label{ADPupdate}
        \Theta_k \kappa_k =\theta_k,
    \end{equation}
    where $\Theta_k\in\mathbb{R}^{l\times r}$, $\theta_k\in\mathbb{R}^{l}$ are constructed by the observed data and $\kappa_k \in\mathbb{R}^{r}$ includes the elements of estimated gain and cost. To ensure that (\ref{ADPupdate}) has a unique solution, the observed input-output data must be information-rich so that $\Theta_k$ has a full column rank. This requirement leads to a set of rank conditions, such as initial excitation, persistence of excitation, etc., that are needed for these algorithms to converge~\cite{auto2016,auto2012,auto2022,TAC2016,trancyber2022}.  However, it is not clear how many data points are required to satisfy these conditions. Furthermore, since the states or system outputs are measured before feeding the model-free optimal control algorithms, they can be contaminated by measurement noise, which can have a significant impact on the performance of these algorithms. Even when a white measurement noise is present, the update rule (\ref{ADPupdate}) gets biased in \cite{auto2016,auto2012,TAC2016,trancyber2022}, \emph{i.e.},
    $$
    \mathbb{E}(\tilde{\Theta}_k) \neq \Theta_k 
    ,\;
    \mathbb{E}(\tilde{\theta}_k) \neq \theta_k
    $$
    holds in general, where $\tilde{\Theta}_k$ and $\tilde{\theta}_k$ denote the corresponding data matrices in the presence of measurement noise.

    \subsubsection{Finite horizon problems}
    Model-free finite-horizon optimal control problems have drawn much less attention than their infinite-horizon counterparts in the literature. These problems can be more challenging, as their optimal solution cannot be described by a time-invariant control law in general. This necessitates an algorithm that inevitably operates on continuous-time signals (an infinite-dimensional space) rather than constant gains (a finite-dimensional space), which increases the demands on memory usage and computational burden significantly. Nevertheless, a few algorithms have been proposed to solve the finite-horizon optimal control problems when the system dynamics are completely unknown. Remarkable works in this domain are mentioned in order: A dual-loop algorithm was proposed based on Kleinman's algorithm in~\cite{dual2017} that actively improves the estimated gain (outer loop) and uses the estimated gain to make new measurements within each iteration (inner loop). In~\cite{singular}, the singular perturbation method was applied to problems that have large horizon lengths compared to the system dynamics and a Hamiltonian matrix whose eigenvalues are off the imaginary axis. Finally, in~\cite{TAC2021}, an iterative algorithm is proposed that can also be used for more general optimal control problems than (\ref{eqn:cost_intro}), \emph{e.g.}, with nonlinear system dynamics. However, this algorithm does not converge to the optimal solution of unconstrained LQR problems (\ref{eqn:cost_intro}) in general.  

    \subsection{Contributions}
    In this paper, we consider continuous-time linear-quadratic output-regulation optimal control problems with unknown dynamics. We provide an algorithm that can be used for both finite-horizon and infinite-horizon problems. We assume that the unknown system is externally symmetric. This assumption is satisfied for all single-input-single-output (SISO) systems. External symmetry is a property of the input-output relation of the system, which is milder than state-space symmetry. Hence, it does not impose any restrictions on the matrices $A$ and $B$ alone in (\ref{eqn:cost_intro}). Rather, a restriction is imposed when the system outputs are considered. External symmetry is inherently present in many applications even when the system models are unknown, such as mechanical systems~\cite{symech}, electrical circuits~\cite{symRLC2}, chemical systems~\cite{plants} and vehicle platoons~\cite{symdata}. For example, all RLCT systems \emph{i.e.}, networks comprising resistors (R), inductors (L), capacitors (C), and transformers (T) are symmetric. Recall that these elements have precise analogs in several other engineering fields as well, including mechanics, hydraulics, and thermodynamics~\cite{symRLC}.
    
    The algorithm proposed in this paper uses the input-output data measured online to solve the optimal control problem by finding the fixed point of Pontryagin's equations. Our algorithm can be understood as an iterative learning control (ILC) law. However, unlike the classical ILC, the proposed algorithm converges to the optimal solution of the LQR problem instead of a predefined reference trajectory. In line with the spirit of ILC methods, the proposed algorithm updates the control signal directly without assuming a specific feedback structure. This is beneficial with regards to memory usage and computational complexity since $m$ parameters in the control input $u(t)$ are updated rather than $mn$ parameters in the feedback gain $K(t)$. Moreover, the optimal feedback gain can be obtained afterward, by using the optimal control signal and $n$ information-rich data points obtained after the learning process. Recently, ILC algorithms have been considered for solving finite-horizon optimal control problems in the literature, by appending auxiliary optimization problems to the ILC algorithms~\cite{ILCaux}. For SISO systems, our algorithm falls back to a continuous-time counterpart of the ILC algorithm in~\cite{bingchu}.    
    
    The main features of the proposed method compared with the state-of-the-art are summarized below:
    
        \subsubsection{Convergence conditions} For finite-horizon problems, the proposed algorithm is guaranteed to converge to the optimal solution with a linear rate from any initial controller, with no restrictions on the horizon length or the associated Hamiltonian matrix eigenvalues. For infinite-horizon problems, convergence is assured as long as the system gain is small.

        \subsubsection{Measurement noise} In the presence of measurement noise, the algorithms~\cite{auto2016,auto2012,TAC2016,trancyber2022,dual2017,singular} get biased and no longer converge to the optimal solution in expectation. On the contrary, the proposed algorithm in this paper provides unbiased updates with bounded variance under noisy measurements. Therefore, the optimal solutions to both finite and infinite-horizon problems can be found by averaging the solutions obtained from noisy measurements.
        
        \subsubsection{Computational complexity} Most existing algorithms require solving a linear algebraic equation like~(\ref{ADPupdate}) at each iteration, resulting in a computational complexity of at least \( O(lr^2) \) per iteration. For instance, \( r = n^2 + nm \) in~\cite{singular}, \( r \geq \frac{n(n+1)}{2} + nm \) in~\cite{auto2012,trancyber2022,TAC2019,TAC2016,auto2016,auto2022}. In~\cite{dual2017}, the complexity is even higher, with \( l \geq N \) and \( r = N\bigl(\frac{n(n+1)}{2} + nm\bigr) \), where \( N \) is the number of intervals used to discretize the horizon \([0, t_f]\). In contrast, the proposed algorithm has a significantly lower computational complexity of \( O(Nm^2) \) per iteration.

        \subsubsection{Intelligent data storage}
        Most data-driven algorithms used for infinite-horizon problems need a so-called intelligent data storage to ensure the recorded data is informative enough for convergence~\cite{auto2016,auto2012,auto2022,TAC2016,trancyber2022}. This is often expressed as a rank condition similar to the persistence of excitation. However, verifying these conditions is non-trivial and the number of data points needed for this purpose is not known in advance. The proposed algorithm in this paper also needs informative data to find the optimal solution to infinite-horizon problems. However, this requirement is met generically (in the measure-theoretic sense) by using $n$ equally distanced data points.
        

        \subsubsection{Exploration noise} An exploration noise is commonly added to the input signals in adaptive dynamic programming, which can help to make the measured data informative and satisfy the rank conditions described above~\cite{auto2016,TAC2016,auto2012,TAC2019,auto2022}. Choosing this exploration noise is not a trivial task and is often done in a heuristic way. The presented algorithm in this paper does not require an exploration noise.

\subsection{Notation}
The following notation is used throughout the paper. For a matrix $M$, we use $[M]_{ij}$ to refer to the element on the $i^{\rm th}$ row and $j^{\rm th}$ column of $M$, $M'$ is the transpose of $M$, and $\vert M\vert$ is a matrix with components $\left[\vert M\vert\right]_{ij}=\vert m_{ij}\vert$. $I$ is used to denote the identity matrix of appropriate dimensions. The Kronecker product is denoted by $\otimes $ and ${\rm vec}:\mathbb{R}^{m\times n}\to \mathbb{R}^{mn}$ is the vectorization operator. The relation $S\succ 0$ ($S\succeq 0$) means that matrix $S$ is symmetric positive (semi)-definite and $S^{1/2}$ is the principal square root of $S\succeq 0$. $\lambda_{\rm min} S$ and $\lambda_{\rm max} S$ denote the smallest and largest eigenvalues of the symmetric matrix $S$. The square diagonal matrix $\Sigma\in\mathbb{R}^{m\times m}$ is called a signature matrix if $[\Sigma]_{ii}\in\lbrace -1,+1\rbrace$ for all $i=1,2,\dots,m$. $\Vert .\Vert_p$ denotes the standard $p$-norm for vectors and matrices ($p\in\mathbb{N}\cup \infty$), $\Vert G\Vert_{\rm pk}$ is the peak-to-peak gain and $\Vert G\Vert_{H_\infty}$ is the $H$-infinity norm of the linear system $G(s)$. We use ${\rm Cov}\left(x;t\right)$ to denote the covariance matrix at time $t$ associated with the continuous-time random process $x:[0,t_f]\to \mathbb{R}^m$. For the functionals $f,g:\mathbb{R}^n\to \mathbb{R}$, we write $f(x)=O\bigl(g(x)\bigr)$ as $x\to\infty$, if there are some positive constants $r$, $a$ such that $\vert f( x)\vert\leq r\vert g(x)\vert$ holds for all $\Vert x\Vert_{\infty}>a$. Finally, the Dirac delta and the Heaviside step functions are denoted by $\delta(t)$ and $\mathbf{1}(t)$ respectively.


\section{Preliminaries}
Consider the continuous-time linear time-invariant system
\begin{align}\label{eqn:system}
\begin{array}{rcl}
    \frac{d}{dt}{x}(t)&=&Ax(t)+Bu(t) \\
    y(t)&=&Cx(t)+Du(t) 
\end{array},&\quad t\geq 0
\end{align}
where $x(t)\in\mathbb{R}^n$, $u(t)\in\mathbb{R}^m$ and $y(t)\in\mathbb{R}^m$. The associated transfer function matrix with (\ref{eqn:system}) is denoted by $G(s)\in\mathbb{C}^{m\times m}$, where
\begin{equation}\label{eqn:G(s)}
    G(s)=C(sI-A)^{-1}B+D
\end{equation}
and $g(t)=C\exp(At)B\mathbf{1}(t)+D\delta(t)$ is the impulse response. Let $\mathcal{L}^m_p(0,t_f)$ be the space of (equivalence classes of) all vector-valued functions $u:[0,t_f]\to\mathbb{R}^m$ with Lebesgue measurable components that have bounded-$p$ norm $\Vert u\Vert_{p,t_f}$ defined by
\begin{equation}\label{signal_normtf}
    \Vert u\Vert_{p,t_f}:=\left( \int_0^{t_f}\Vert u(t)\Vert_p^p dt\right)^{1/p}
\end{equation}
for $t_f\in(0,+\infty ]$ and $p\in\mathbb{N}\cup\lbrace \infty\rbrace$. Throughout this paper, we do not distinguish functions that are equal almost everywhere. A linear operator $\mathcal{K}$ is called bounded (in the space $\mathcal{L}^m_p(0,t_f)$), if there exists some constant $\gamma(\mathcal{K})\in(0,+\infty)$ such that
$$
\Vert \mathcal{K}u\Vert_{p,t_f} \leq \gamma(\mathcal{K})\Vert u\Vert_{p,t_f}
$$
for all $u\in\mathcal{L}^m_p(0,t_f)$. The smallest such constant $\gamma(\mathcal{K})$ is called the operator norm which is denoted by $\Vert\mathcal{K}\Vert_{p,t_f}$.
The identity operator $\mathcal{I}$ and the time-reversal operator $\mathcal{J}$ defined as
\begin{equation}\label{eqn:O_timereversal}
    \mathcal{J}u(t):={\hat{u}}(t)=u(t_f-t)
\end{equation}
are both bounded linear operators with unity norms. The linear-time invariant system (\ref{eqn:system}) constitutes an affine operator as $y=\mathcal{G}u+d_{x_0}$, where
\begin{equation}\label{eqn:Gu_linearoperator}
         \mathcal{G}u(t)=\int_0^{t} g(t-\tau)u(\tau)d\tau , \quad t\in[0,t_f]  
\end{equation}
and $d_{x_0}(t)=C\exp(At)x_0$ is the natural response of (\ref{eqn:system}) to the initial condition $x(0)=x_0$. The input-output gain of the system (\ref{eqn:system}) on the finite time interval $t\in[0,t_f]$ is defined based on the linear part (\ref{eqn:Gu_linearoperator}) as follows
\begin{equation}\label{system_normtf}
    \Vert \mathcal{G}\Vert_{p,t_f}:=\sup_{u\neq 0} \frac{\Vert \mathcal{G}u\Vert_{p,t_f}}{\Vert u\Vert_{p,t_f}}.
\end{equation}
The fundamental multiple-input-multiple-output (MIMO) system norms can be recovered from definition (\ref{system_normtf}) by allowing $t_f\to \infty$ as \emph{e.g.}, the $H$-infinity gain $\Vert \mathcal{G}\Vert_{2,\infty}=\Vert G\Vert_{H_\infty}$ and the peak-to-peak gain $\Vert \mathcal{G}\Vert_{\infty,\infty}=\Vert G\Vert_{\rm pk}$ (there is no general consensus on the definition of peak-to-peak gain for MIMO systems~\cite{apkarian2022mixed}). As the following proposition shows, finite-horizon gains (\ref{system_normtf}) are monotonic with respect to the horizon length $t_f$. Therefore, the finite-horizon system gains (\ref{system_normtf}) lower-bound the conventional (infinite-horizon) system norms.
\begin{proposition}\label{prop:monogain}
   If $t_1<t_2$, then
   $$
   \Vert \mathcal{G}\Vert_{p,t_1}\leq \Vert \mathcal{G}\Vert_{p,t_2}
   $$
   for all $p\in\mathbb{N}\cup\lbrace \infty\rbrace$.
\end{proposition}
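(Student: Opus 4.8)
The plan is to exploit the causality of the convolution operator $\mathcal{G}$ defined in (\ref{eqn:Gu_linearoperator}): since the output $\mathcal{G}u(t)$ depends only on the input values $u(\tau)$ for $\tau\leq t$, extending an input defined on $[0,t_1]$ by zero onto the larger interval $[0,t_2]$ leaves the output on $[0,t_1]$ unchanged while contributing a nonnegative amount on $(t_1,t_2]$. Concretely, given any $u\neq 0$ in $\mathcal{L}^m_p(0,t_1)$, I would introduce its zero-extension $\tilde u\in\mathcal{L}^m_p(0,t_2)$ defined by $\tilde u(t)=u(t)$ for $t\in[0,t_1]$ and $\tilde u(t)=0$ for $t\in(t_1,t_2]$. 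This extension is clearly admissible, and since the appended portion vanishes identically, it preserves the input norm, $\Vert\tilde u\Vert_{p,t_2}=\Vert u\Vert_{p,t_1}$, directly from (\ref{signal_normtf}) (and likewise for the supremum when $p=\infty$).

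The key step is to compare the output norms using the causality identity $\mathcal{G}\tilde u(t)=\mathcal{G}u(t)$ for $t\in[0,t_1]$, which follows because $\tilde u$ agrees with $u$ on $[0,t_1]$ and the integral in (\ref{eqn:Gu_linearoperator}) up to time $t\leq t_1$ only sees those values. For finite $p$ I would split the integral as
$$
\Vert\mathcal{G}\tilde u\Vert_{p,t_2}^p=\int_0^{t_1}\Vert\mathcal{G}\tilde u(t)\Vert_p^p\,dt+\int_{t_1}^{t_2}\Vert\mathcal{G}\tilde u(t)\Vert_p^p\,dt\geq\Vert\mathcal{G}u\Vert_{p,t_1}^p,
$$
where the first integral equals $\Vert\mathcal{G}u\Vert_{p,t_1}^p$ by the causality identity and the second is nonnegative; taking $p$-th roots yields $\Vert\mathcal{G}\tilde u\Vert_{p,t_2}\geq\Vert\mathcal{G}u\Vert_{p,t_1}$. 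The case $p=\infty$ follows in parallel, replacing the integrals by suprema of $\Vert\mathcal{G}\tilde u(t)\Vert_\infty$ over $[0,t_2]$ and $[0,t_1]$.

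Combining these facts, for every admissible $u$ I would obtain
$$
\Vert\mathcal{G}\Vert_{p,t_2}\geq\frac{\Vert\mathcal{G}\tilde u\Vert_{p,t_2}}{\Vert\tilde u\Vert_{p,t_2}}\geq\frac{\Vert\mathcal{G}u\Vert_{p,t_1}}{\Vert u\Vert_{p,t_1}},
$$
and taking the supremum over $u\neq 0$ on the right-hand side gives $\Vert\mathcal{G}\Vert_{p,t_2}\geq\Vert\mathcal{G}\Vert_{p,t_1}$ by the definition (\ref{system_normtf}). I do not anticipate a serious obstacle; the argument is essentially a causality-plus-zero-extension observation, and the only points requiring mild care are confirming that the zero-extension lies in $\mathcal{L}^m_p(0,t_2)$ and treating the $p=\infty$ case uniformly alongside finite $p$, both of which are routine.
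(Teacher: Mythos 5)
Your proposal is correct and takes essentially the same route as the paper's own proof: the paper likewise works with zero-extensions of inputs from $[0,t_1]$ to $[0,t_2]$ (packaged as the subspace $\bar{\mathcal{L}}_p^m(0,t_1)\subset\mathcal{L}_p^m(0,t_2)$), notes that the extension preserves the input norm while the output norm can only grow with the interval, and concludes by comparing suprema as you do. The only cosmetic difference is that you spell out the causality identity $\mathcal{G}\tilde u(t)=\mathcal{G}u(t)$ for $t\in[0,t_1]$, which the paper uses implicitly.
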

\begin{proof}
    Define
    $$\bar{\mathcal{L}}_p^m(0,t_1)=\left\lbrace u\in\mathcal{L}_p^m(0,t_2)\,\vert\, u(t)=0,t\in(t_1,t_2]\right\rbrace
    $$
    and let $y(t)=\mathcal{G}u(t)$ be given by (\ref{eqn:Gu_linearoperator}) where $t_f=t_2$. One can write
\begin{align*}
    \Vert \mathcal{G}\Vert_{p,t_1}=\sup_{u\in\mathcal{L}_p^m(0,t_1)\backslash 0} \frac{\Vert y\Vert_{p,t_1}}{\Vert u\Vert_{p,t_1}}=\sup_{u\in\bar{\mathcal{L}}_p^m(0,t_1)\backslash 0} \frac{\Vert y\Vert_{p,t_1}}{\Vert u\Vert_{p,t_2}}\leq \\
    \sup_{u\in\bar{\mathcal{L}}_p^m(0,t_1)\backslash 0} \frac{\Vert y\Vert_{p,t_2}}{\Vert u\Vert_{p,t_2}}
    \leq \sup_{u\in\mathcal{L}_p^m(0,t_2)\backslash 0} \frac{\Vert y\Vert_{p,t_2}}{\Vert u\Vert_{p,t_2}}=\Vert \mathcal{G}\Vert_{p,t_2}.
\end{align*}
\end{proof}
We only consider the space $\mathcal{L}^m_p(0,t_f)$ with $p=2$ for finite-horizon problems and $p=\infty$ for infinite-horizon problems in this paper.

\subsection{$\mathcal{L}^m_2(0,t_f)$}
The space $\mathcal{L}^m_2(0,t_f)$ is complete (Hilbert) with the inner product $\langle u,v\rangle_{t_f}=\int_{0}^{t_f}u'(t)v(t)dt$ and the induced norm $\Vert u\Vert_{2,t_f}=\sqrt{\langle u,u\rangle_{t_f}}$. The (linear part of) system (\ref{eqn:system}) is a bounded linear operator in this space, whether or not the system (\ref{eqn:system}) is stable. The boundedness of $\mathcal{G}$ can be easily shown by separating its direct-feedthrough term as $\mathcal{G}u=\mathcal{G}_{sp}u+Du$ and using the triangle inequality
\begin{align*}
\Vert \mathcal{G}u \Vert_{2,t_f} &\leq \Vert \mathcal{G}_{sp}u\Vert_{2,t_f}+ \Vert Du\Vert_{2,t_f} \\
&\leq (\gamma(\mathcal{G}_{sp}) +\Vert D\Vert_{2})\Vert u\Vert_{2,t_f},
\end{align*}
in which the constant $\gamma(\mathcal{G}_{sp})$ is given by~\cite{kouba2020}
    $$
    \gamma(\mathcal{G}_{sp})=\iint_{[0,t_f]^2}\Vert C\exp(A(t-\tau))B\Vert_2^2 dt d\tau <\infty.
    $$
For every bounded linear operator $\mathcal{K}$ in $\mathcal{L}^m_2(0,t_f)$, there exists a unique bounded linear adjoint operator $\mathcal{K}^*$ such that $\langle \mathcal{K}u,v\rangle_{t_f}= \langle u,\mathcal{K}^*v\rangle_{t_f}$
holds for all $u,v\in\mathcal{L}^m_2(0,t_f)$. The following proposition gives the adjoint of an integral operator as
\begin{equation}\label{eqn:integraloperator}
\mathcal{K}u(t)=\int_0^{t_f}K(t,\tau)u(\tau)d\tau,
\end{equation}
where the kernel components are extended real-valued functions $[K]_{ij}:[0,t_f]^2\to  \mathbb{R}\cup \lbrace{\pm \infty}\rbrace$.

\begin{proposition}[\cite{kouba2020}]\label{prop:adjoint}
    The adjoint of the bounded linear operator (\ref{eqn:integraloperator}) is given by
    $$
    \mathcal{K}^*u(t)=\int_{0}^{t_f} K'(\tau,t)u(\tau)d\tau.
    $$
\end{proposition}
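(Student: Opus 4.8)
The plan is to verify the claimed formula directly against the defining property of the adjoint. Since the excerpt has already established that every bounded linear operator on the Hilbert space $\mathcal{L}^m_2(0,t_f)$ admits a \emph{unique} bounded adjoint $\mathcal{K}^*$ characterized by $\langle \mathcal{K}u,v\rangle_{t_f}=\langle u,\mathcal{K}^*v\rangle_{t_f}$ for all $u,v$, it suffices to exhibit one operator satisfying this identity; uniqueness then forces it to be the adjoint. Accordingly, I would introduce the candidate operator $\mathcal{K}^{\dagger}v(t):=\int_0^{t_f}K'(\tau,t)v(\tau)d\tau$ and show that $\langle \mathcal{K}u,v\rangle_{t_f}=\langle u,\mathcal{K}^{\dagger}v\rangle_{t_f}$ holds for every $u,v\in\mathcal{L}^m_2(0,t_f)$.

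The computation itself is short. First I would expand the left-hand side using the definition of the inner product and the integral representation of $\mathcal{K}u$, and transpose the matrix kernel via $(K(t,\tau)u(\tau))'=u'(\tau)K'(t,\tau)$:
\begin{align*}
\langle \mathcal{K}u,v\rangle_{t_f}
&=\int_0^{t_f}\Bigl(\int_0^{t_f}K(t,\tau)u(\tau)d\tau\Bigr)'v(t)\,dt\\
&=\int_0^{t_f}\int_0^{t_f} u'(\tau)K'(t,\tau)v(t)\,d\tau\,dt.
\end{align*}
Next I would interchange the order of integration by Fubini's theorem and regroup the factors, recognizing the inner integral as $\mathcal{K}^{\dagger}v$:
\begin{align*}
\langle \mathcal{K}u,v\rangle_{t_f}
&=\int_0^{t_f} u'(\tau)\Bigl(\int_0^{t_f}K'(t,\tau)v(t)\,dt\Bigr)d\tau\\
&=\int_0^{t_f} u'(\tau)\,\mathcal{K}^{\dagger}v(\tau)\,d\tau
=\langle u,\mathcal{K}^{\dagger}v\rangle_{t_f}.
\end{align*}
Here the variables $t$ and $\tau$ play symmetric roles in the transposed kernel, so after relabeling the dummy variable the inner integral is exactly $\mathcal{K}^{\dagger}v(\tau)$. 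This establishes the adjoint identity, and by uniqueness $\mathcal{K}^*=\mathcal{K}^{\dagger}$, which is the stated formula.

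The one step that requires care is the interchange of the order of integration. Although the kernel entries are permitted to be extended real-valued, Fubini--Tonelli is applicable as soon as the double integral of $\vert u'(\tau)K'(t,\tau)v(t)\vert$ is finite. Expanding componentwise and applying the Cauchy--Schwarz inequality to each term bounds this iterated integral by a sum of products of the form $\Vert u_i\Vert_{2,t_f}\Vert v_j\Vert_{2,t_f}$ times the Hilbert--Schmidt norm of the corresponding kernel component $\iint_{[0,t_f]^2}\vert[K(t,\tau)]_{ji}\vert^2\,dt\,d\tau$. For the square-integrable kernels arising here on the finite horizon (the same square-integrability that underlies the constant $\gamma(\mathcal{G}_{sp})$ computed earlier), this quantity is finite, so the swap is legitimate and the remaining manipulations are purely mechanical.
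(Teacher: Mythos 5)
The paper gives no proof of this proposition at all: it is quoted from \cite{kouba2020} and used as a black box, so your argument cannot be compared with an in-paper proof---but it is the standard direct verification and it is essentially correct. Your structure is sound: since the adjoint on the Hilbert space $\mathcal{L}^m_2(0,t_f)$ is unique, it suffices to exhibit one operator satisfying $\langle \mathcal{K}u,v\rangle_{t_f}=\langle u,\mathcal{K}^{\dagger}v\rangle_{t_f}$ for all $u,v$ (and boundedness of $\mathcal{K}^{\dagger}$ then comes for free, since it must coincide with the adjoint, which is automatically bounded). The kernel transposition, the relabeling of the dummy variable, and the identification of the inner integral with $\mathcal{K}^{\dagger}v(\tau)$ are all correct. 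The one caveat is exactly where you placed it, and it is worth being precise about its scope: your Fubini justification assumes componentwise square-integrable (Hilbert--Schmidt) kernels, whereas the proposition is stated for extended real-valued kernels, and the paper later applies it (in Lemma~\ref{lem:nn}) to $\mathcal{J}\mathcal{G}$, whose kernel $g(t_f-t-\tau)$ contains the distributional term $D\,\delta(t_f-t-\tau)$ when $D\neq 0$ and is then not a square-integrable function at all. This is not a fatal gap, but to cover the paper's actual use case you should split $\mathcal{K}$ into its strictly proper part (Hilbert--Schmidt on the finite horizon, handled by your Fubini computation) plus the feedthrough part, whose adjoint is computed directly from $\langle D u, v\rangle_{t_f}=\langle u, D'v\rangle_{t_f}$ composed with the self-adjoint involution $\mathcal{J}$; external symmetry $\Sigma_e D'=D\Sigma_e$ then makes the delta term consistent with the claimed formula. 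With that one-line addendum your proof covers everything the paper needs.
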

A linear operator $\mathcal{K}$ is called non-negative, if $\langle \mathcal{K}u,u\rangle_{t_f} \geq 0$ holds for all $u\in\mathcal{L}^m_2(0,t_f)$. 


\subsection{$\mathcal{L}^m_{\infty}(0,t_f)$}
All the functions in $\mathcal{L}^m_{\infty}(0,t_f)$ have (essentially) bounded components. In addition, the system norm in this space admits an explicit expression in the time domain as follows
\begin{equation}\label{Ginftf}
    \left\Vert \mathcal{G}\right\Vert_{\infty,t_f}=\left\Vert \int_0^{t_f} \vert g(t)\vert dt\right\Vert_{\infty},
\end{equation}
where the absolute value is understood component-wise. Identity (\ref{Ginftf}) shows that $\left\Vert \mathcal{G}\right\Vert_{\infty,t_f}$ is bounded when $t_f$ is finite, even if system (\ref{eqn:system}) is unstable.

\subsection{Problem statement}
In this paper, we consider the output-regulation optimal control problem 
\begin{equation}\label{eqn:optimization}
    \begin{array}[c]{rll}
    \underset{u\in\mathcal{L}^m_{p}(0,t_f)}{\text{minimize}} & J(x_0,u)=\frac{1}{2}\int_{0}^{t_f} y'(t)Qy(t)+u'(t)Ru(t) dt\\
    \mbox{subject to} & x,u,y \textnormal{ satisfy (\ref{eqn:system})}\\
    &x(0)=x_0,
    \end{array}
\end{equation}
where $Q\succeq 0$, $R\succ 0$. We are interested in finding the optimal solution $u^{\star}$ to the problem (\ref{eqn:optimization}) purely based on the input-output data observed from system~(\ref{eqn:system}). The system dynamics (\ref{eqn:system}), \emph{i.e.}, the state-space matrices $A$, $B$, $C$ and $D$ are assumed unkown. However, we make the following assumption throughout the paper.

\begin{assumption}\label{ass:sym}
    System (\ref{eqn:system}) is externally symmetric.
\end{assumption}

\section{Symmetric systems}
A system (\ref{eqn:system}) is called externally symmetric if there is a signature matrix $\Sigma_e$ such that
\begin{equation}\label{eqn:G'=G}
    \Sigma_e G'(s)=G(s) \Sigma_e.
\end{equation}
Since the identity matrix is also a signature matrix, all systems with symmetric transfer functions (and, in particular, all SISO systems) are externally symmetric. This special case of external symmetry with $\Sigma_e=I$ is also referred to as ``system symmetry" in the literature~\cite{lam2019}. A System (\ref{eqn:system}) is said to be \emph{internally} symmetric if the matrix
$$
\begin{bmatrix}
    -\Sigma_i & 0 \\
    0 & \Sigma_e
\end{bmatrix}
\begin{bmatrix}
    A & B \\
    C & D
\end{bmatrix}
$$
is symmetric for some signature matrices $\Sigma_e$ and $\Sigma_i$. Matrices $\Sigma_e$ and $\Sigma_i$ are called the external and internal signature matrices of the symmetric system (\ref{eqn:system}), respectively.
It is easy to show that an internally symmetric system is also externally symmetric (with the external signature $\Sigma_e$ and any internal signature $\Sigma_i$), because
\begin{align*}
\Sigma_e G'(s)&=\Sigma_e B' (sI-A')^{-1} C' + \Sigma_e D' \\
&=\Sigma_e (-\Sigma_e C \Sigma_i) (sI-\Sigma_i A\Sigma_i)^{-1} (-\Sigma_i B \Sigma_e)\\
&+ \Sigma_e (\Sigma_e D \Sigma_e)\\
&=C \Sigma_i (sI-\Sigma_i A\Sigma_i)^{-1} \Sigma_i B \Sigma_e + D \Sigma_e\\
&=C (sI- A)^{-1} B \Sigma_e + D \Sigma_e\\
&=G(s) \Sigma_e,
\end{align*}
where we have used the fact that any signature matrix $\Sigma$ satisfies $\Sigma=\Sigma'=\Sigma^{-1}$. Conversely, an externally symmetric system always admits an internally symmetric realization with some suitable internal signature $\Sigma_i$~\cite{wil1976}. Though, obviously, not every state-space is externally symmetric. The following proposition characterizes the family of externally symmetric systems in the state space.

\begin{proposition}[\cite{wil1976}]\label{prop:exsym_ss}
Let (\ref{eqn:system}) be a minimal relaization of (\ref{eqn:G(s)}). The system transfer function (\ref{eqn:G(s)}) satisfies (\ref{eqn:G'=G}) if and only if there exists unique symmetric matrices $S$ and $T$ with $T$ being non-singular, such that
\begin{align}\label{eqn:A=ST}
    A&=ST\nonumber\\
    C &= \Sigma_e B' T\nonumber\\
    D&=\Sigma_e D' \Sigma_e.
\end{align}    
\end{proposition}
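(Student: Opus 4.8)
The plan is to prove the two implications separately, exploiting the fact that (\ref{eqn:G'=G}) decouples into a static relation on the feedthrough matrix $D$ and a dynamic relation on the strictly proper part $C(sI-A)^{-1}B$. In both directions the feedthrough relation is handled the same way: matching the constant terms in $\Sigma_e G'(s)=G(s)\Sigma_e$ gives $\Sigma_e D'=D\Sigma_e$, and since $\Sigma_e^2=I$ this is equivalent to $D=\Sigma_e D'\Sigma_e$, which is the third equation in (\ref{eqn:A=ST}).

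For the converse (``if'') direction I would substitute the postulated relations directly. Using the symmetry of $S$ and $T$ one has $A'=(ST)'=TS$ and $C'=(\Sigma_e B'T)'=TB\Sigma_e$, so the dynamic parts of $G(s)\Sigma_e$ and $\Sigma_e G'(s)$ become $\Sigma_e B'T(sI-ST)^{-1}B\Sigma_e$ and $\Sigma_e B'(sI-TS)^{-1}TB\Sigma_e$ respectively. These coincide because of the elementary resolvent identity $T(sI-ST)^{-1}=(sI-TS)^{-1}T$, which follows at once from $(sI-TS)T=sT-TST=T(sI-ST)$. Together with the feedthrough relation this establishes (\ref{eqn:G'=G}).

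For the forward (``only if'') direction, the dynamic part of (\ref{eqn:G'=G}) reads $C(sI-A)^{-1}(B\Sigma_e)=(\Sigma_e B')(sI-A')^{-1}C'$, an identity between two transfer functions. Here I would invoke the uniqueness of minimal realizations up to similarity. Since (\ref{eqn:system}) is minimal, both $(A,B\Sigma_e,C)$ and $(A',C',\Sigma_e B')$ are minimal realizations of this common transfer function (minimality is preserved under transposition and under right-multiplication of $B$, or left-multiplication of $C$, by the invertible matrix $\Sigma_e$). Hence there is a unique invertible $T$ with $A'=TAT^{-1}$, $C'=TB\Sigma_e$ and $\Sigma_e B'=CT^{-1}$. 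The last relation gives $C=\Sigma_e B'T$ directly, and setting $S=AT^{-1}$ yields $A=ST$; its symmetry follows because $S'=T^{-1}A'=T^{-1}(TAT^{-1})=AT^{-1}=S$ once $T$ is known to be symmetric. Uniqueness of the pair $(S,T)$ is inherited from uniqueness of the similarity transformation.

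The crux, and the step I expect to be the main obstacle, is showing that $T$ is symmetric. I plan to settle this by a uniqueness argument: transposing each of the three similarity relations shows that $T'$ satisfies exactly the same three equations as $T$ (transposing $A'=TAT^{-1}$ gives $A'=T'A(T')^{-1}$, transposing $\Sigma_e B'=CT^{-1}$ gives $C'=T'B\Sigma_e$, and transposing $C'=TB\Sigma_e$ gives $\Sigma_e B'=C(T')^{-1}$). Because the similarity transformation between two minimal realizations is unique, this forces $T'=T$. With $T$ symmetric and nonsingular in hand, the symmetry of $S=AT^{-1}$ follows as above, which completes the characterization.
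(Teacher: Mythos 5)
Your proof is correct, but note that the paper itself contains no proof of this proposition: it is imported from Willems \cite{wil1976}, so there is no internal argument to compare against. Judged on its own, your argument is sound and is essentially the classical one. The feedthrough matching $\Sigma_e D'=D\Sigma_e \Leftrightarrow D=\Sigma_e D'\Sigma_e$, the resolvent identity $T(sI-ST)^{-1}=(sI-TS)^{-1}T$ for the ``if'' direction, and the state-space isomorphism theorem for the ``only if'' direction are exactly the standard ingredients, and your key device --- transposing the three intertwining relations $A'=TAT^{-1}$, $C'=TB\Sigma_e$, $\Sigma_e B'=CT^{-1}$ and invoking uniqueness of the similarity transformation between minimal realizations to force $T'=T$ --- is the same mechanism that makes the original proof work. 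Your minimality bookkeeping is also right: right-multiplication of $B$ by the invertible $\Sigma_e$ preserves controllability, and transposition exchanges controllability and observability, so $(A,B\Sigma_e,C)$ and $(A',C',\Sigma_e B')$ are both minimal realizations of the common strictly proper part extracted from (\ref{eqn:G'=G}). The only step I would ask you to spell out is the final uniqueness claim: to inherit uniqueness of $(S,T)$ from uniqueness of the similarity transform, observe that \emph{any} symmetric pair satisfying (\ref{eqn:A=ST}) reproduces the same three intertwining relations (from $A=ST$ with $S=S'$, $T=T'$ one gets $A'=TS=TAT^{-1}$, and $C=\Sigma_e B'T$ gives the other two), so every admissible $T$ is \emph{the} similarity transform and $S=AT^{-1}$ follows; this is implicit in your converse computation but deserves one explicit line. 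This is a minor completion, not a gap.
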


It was first shown in \cite[p3–15]{fro1910} that all square matrices can be decomposed as two symmetric matrices as $A=ST$~\cite{Uh2013}. This decomposition was rediscovered in \cite{taussky1959}, and it was later shown to be unique given the second condition in (\ref{eqn:A=ST})~\cite{wil1976}. Proposition~\ref{prop:exsym_ss} implies that external symmetry is an \emph{input-output} property and does not impose any restrictions on the internal model of the system. Any system is externally symmetric with a suitable output because, for all pairs $(A,B)$, one can define a suitable pair $(C,D)$ such that (\ref{eqn:A=ST}) holds and the system~(\ref{eqn:system}) becomes externally symmetric. Furthermore, the matrix $C$ can be absorbed in the cost function so that the optimization problem~(\ref{eqn:optimization}) boils down to a standard LQR problem as~(\ref{eqn:cost_intro}).

A special case of symmetric systems frequently encountered in applications is the family of (internally) \emph{completely} symmetric systems. These systems are symmetric with the signatures $\Sigma_i=-I$ and $\Sigma_e=I$, \emph{i.e.}, they satisfy
\begin{align}\label{eqn:int.com.sym}
    A&=A'\nonumber\\
    B&=C'\nonumber\\
    D&=D'.
\end{align}
Property (\ref{eqn:int.com.sym}) is often called ``state-space symmetry" in the literature. Obviously, a completely symmetric system has a symmetric transfer function, however, not all symmetric transfer functions admit symmetric state-space realizations as (\ref{eqn:int.com.sym})~\cite{lam2019}. A well-known special case of completely symmetric systems is the family of relaxation systems~\cite{Pat2019}, which in addition to (\ref{eqn:int.com.sym}), also satisfy $A\preceq 0$, $D\succeq 0$.

\section{Main results}
We present an algorithm that can solve the optimal control problem (\ref{eqn:optimization}) without any prior model knowledge apart from external symmetry (\ref{eqn:G'=G}). Starting with an arbitrary initial input signal, it involves repeatedly running slightly modified time-reversed output signals through the system until convergence. Consider the affine operator $\mathcal{T}\in\mathcal{L}_p^m(0,t_f)\times \mathcal{L}_p^m(0,t_f)$ as follows
\begin{equation}\label{eqn:T}
\mathcal{T}:=-R^{-1}\Sigma_e \mathcal{J}\mathcal{G}\Sigma_e Q \mathcal{J}(\mathcal{G}+d_{x_0}),  
\end{equation}
as illustrated in Figure (\ref{fig:operator}). This operator can be decomposed as $\mathcal{T}=\mathcal{S}+r_{x_0}$ where
\begin{align}  
\mathcal{S}&=-R^{-1}\Sigma_e \mathcal{J}\mathcal{G}\Sigma_e Q \mathcal{J}\mathcal{G}=\mathcal{T}_{|x_0=0},\label{eqn:S}\\
r_{x_0}&=-R^{-1}\Sigma_e \mathcal{J}\mathcal{G}\Sigma_e Q \mathcal{J}d_{x_0}.\nonumber
\end{align}
Since all the operators (including the linear system (\ref{eqn:Gu_linearoperator})) in series are bounded in (\ref{eqn:S}), $\mathcal{S}$ is a bounded linear operator in $\mathcal{L}_p^m(0,t_f)$ and $r_{x_0}\in\mathcal{L}_p^m(0,t_f)$. An attractive feature of operator $\mathcal{T}$ is that it can be evaluated online by only using the input-output data of the system. Therefore, as the next lemma shows, this operator can be used to solve Problem~(\ref{eqn:optimization}) without knowing the system model. When there is no confusion we omit the time arguments for conciseness.

\begin{figure*}
	\begin{center}
    \includegraphics[width=0.75\linewidth]{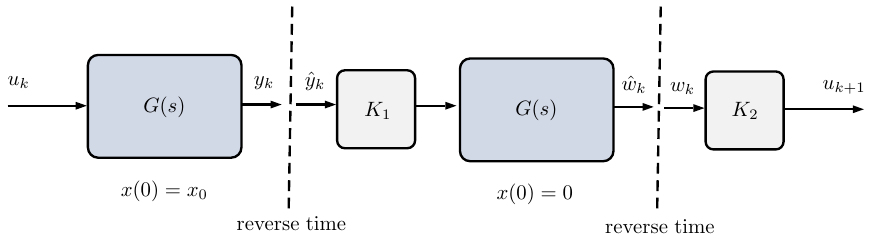}  
	\caption{Operator $u_{k+1}=\mathcal{T}(u_{k})$ defined in (\ref{eqn:T}) whose fixed point coincides with the optimal solution of problem (\ref{eqn:optimization}) by the Pontryagin's principle (\ref{eqn:pontrya}). The constant gains are given by $K_1=\Sigma_e Q$ and $K_2=-R^{-1}\Sigma_e$ and the dashed lines show the time-reversal operator (\ref{eqn:O_timereversal}).}
	\label{fig:operator}
	\end{center}
\end{figure*}

\begin{lemma}\label{lem:Tu=u}
    Let Assumption~\ref{ass:sym} hold. The control input $u\in\mathcal{L}_p^m(0,t_f)$ is optimal in Problem~(\ref{eqn:optimization}) if and only if it is a fixed point of the operator $\mathcal{T}$ defined in (\ref{eqn:T}).
\end{lemma}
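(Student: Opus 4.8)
The plan is to reduce the fixed-point equation $u=\mathcal{T}u$ to the first-order stationarity condition of the cost $J(x_0,\cdot)$ and then to invoke convexity, which makes stationarity simultaneously necessary and sufficient for optimality. First I would express $J$ as a quadratic functional of $u$ alone: substituting $y=\mathcal{G}u+d_{x_0}$ into (\ref{eqn:optimization}) gives $J=\tfrac12\langle \mathcal{G}u+d_{x_0},\,Q(\mathcal{G}u+d_{x_0})\rangle_{t_f}+\tfrac12\langle u,\,Ru\rangle_{t_f}$, whose Gateaux derivative along an arbitrary direction $h$ is $\langle h,\;\mathcal{G}^{*}Q(\mathcal{G}u+d_{x_0})+Ru\rangle_{t_f}$, where $\mathcal{G}^{*}$ is the adjoint furnished by Proposition~\ref{prop:adjoint}. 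Because $Q\succeq 0$ and $R\succ 0$, the Hessian $R+\mathcal{G}^{*}Q\mathcal{G}$ is coercive and $J$ is strictly convex; hence $u$ is optimal if and only if $Ru+\mathcal{G}^{*}Q(\mathcal{G}u+d_{x_0})=0$, equivalently $u=-R^{-1}\mathcal{G}^{*}Q(\mathcal{G}u+d_{x_0})$.

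It then remains to identify this right-hand side with $\mathcal{T}u$, and this is where Assumption~\ref{ass:sym} enters, through the operator identity $\mathcal{G}^{*}=\Sigma_e\,\mathcal{J}\mathcal{G}\mathcal{J}\,\Sigma_e$, which I expect to be the main obstacle. To prove it I would first compute the kernel of $\mathcal{J}\mathcal{G}\mathcal{J}$: using $\mathcal{J}^2=\mathcal{I}$ together with a change of variables in (\ref{eqn:Gu_linearoperator}) yields $(\mathcal{J}\mathcal{G}\mathcal{J}v)(t)=\int_t^{t_f} g(s-t)v(s)\,ds$. By Proposition~\ref{prop:adjoint} the adjoint has kernel $\mathcal{G}^{*}v(t)=\int_t^{t_f} g'(s-t)v(s)\,ds$, so the claim reduces to the pointwise relation $\Sigma_e\,g(\tau)\,\Sigma_e=g'(\tau)$. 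This is exactly the time-domain form of (\ref{eqn:G'=G}): taking the inverse Laplace transform of $\Sigma_e G'(s)=G(s)\Sigma_e$ and using $\Sigma_e=\Sigma_e'=\Sigma_e^{-1}$ gives $g(t)=\Sigma_e g'(t)\Sigma_e$, so the identity holds precisely under external symmetry and fails in general otherwise.

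Finally I would assemble the pieces. Since $\mathcal{J}$ commutes with multiplication by the constant matrices $\Sigma_e,Q$ and satisfies $\mathcal{J}^2=\mathcal{I}$, inserting $\mathcal{J}\mathcal{J}=\mathcal{I}$ between $\mathcal{G}$ and $\Sigma_e$ in (\ref{eqn:T}) recasts $\Sigma_e\mathcal{J}\mathcal{G}\Sigma_e Q\mathcal{J}$ as $(\Sigma_e\mathcal{J}\mathcal{G}\mathcal{J}\Sigma_e)Q=\mathcal{G}^{*}Q$, so that $\mathcal{T}u=-R^{-1}\mathcal{G}^{*}Q(\mathcal{G}u+d_{x_0})$. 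Comparing with the stationarity condition derived above shows $u=\mathcal{T}u$ if and only if $u$ is optimal, which completes the proof. Equivalently, the conclusion can be read off Pontryagin's principle (\ref{eqn:pontrya}): the adjoint identity is exactly what recasts the backward costate equation, after time reversal by $\mathcal{J}$, as a forward sweep through the plant, and external symmetry is what guarantees this sweep is realized by the same operator $\mathcal{G}$ rather than a separately identified adjoint system.
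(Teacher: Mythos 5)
Your proposal is correct, but it takes a genuinely different route from the paper. The paper's proof goes through Pontryagin's principle: it writes the costate equation (\ref{eqn:step1}), reverses time, and invokes the state-space characterization of Proposition~\ref{prop:exsym_ss} ($A=ST$, $C=\Sigma_e B'T$, $D=\Sigma_e D'\Sigma_e$) to recognize $T^{-1}\hat{\lambda}$ as a state trajectory of the \emph{same} plant driven by $\Sigma_e Q\hat{y}$ from the origin; the payoff of that internal argument is that the two-experiment structure of Algorithm~\ref{alg} (steps 2--3, Figure~\ref{fig:operator}) is read off directly from the construction. You instead stay entirely at the input--output level: eliminate the dynamics, differentiate the convex quadratic functional, obtain the stationarity condition $Ru+\mathcal{G}^{*}Q(\mathcal{G}u+d_{x_0})=0$ (necessary and sufficient since $R\succ 0$ and $Q\succeq 0$), and identify $\mathcal{T}u=-R^{-1}\mathcal{G}^{*}Q(\mathcal{G}u+d_{x_0})$ via the adjoint identity $\mathcal{G}^{*}=\Sigma_e\mathcal{J}\mathcal{G}\mathcal{J}\Sigma_e$, which you correctly reduce to the impulse-response relation $g'(t)=\Sigma_e g(t)\Sigma_e$, the time-domain form of (\ref{eqn:G'=G}). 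Your key identity is in fact (\ref{eqn:adjsym}), which the paper proves anyway inside Lemma~\ref{lem:nn}; so your route unifies Lemma~\ref{lem:Tu=u} with Lemma~\ref{lem:nn} (non-negativity of $-R^{1/2}\mathcal{S}R^{-1/2}$ is immediate once $\mathcal{S}=-R^{-1}\mathcal{G}^{*}Q\mathcal{G}$), and it has the additional merit of not invoking Proposition~\ref{prop:exsym_ss} at all, whose minimal-realization hypothesis the paper's own proof tacitly requires. Two points to tighten: (i) your variational argument is native to the Hilbert space $p=2$, while the lemma is stated for general $p$; for $p=\infty$ note that $\mathcal{L}_{\infty}^m(0,t_f)\subset\mathcal{L}_{2}^m(0,t_f)$ on a finite horizon and that the $\mathcal{L}_2$ gradient vanishes almost everywhere once the directional derivatives vanish along the dense set of bounded directions; (ii) applying Proposition~\ref{prop:adjoint} to the kernel $g(t-\tau)$ is informal when $D\neq 0$ because of the $D\delta$ term, so you should split $\mathcal{G}=\mathcal{G}_{sp}+D$ as in the preliminaries and handle the feedthrough separately via $D'=\Sigma_e D\Sigma_e$. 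Neither gap is structural.
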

\begin{proof}
We begin with adjoining the linear system dynamics (\ref{eqn:system}) to the optimization problem (\ref{eqn:optimization}) to form the Hamiltonian
\begin{align}\label{eqn:H}
    \mathcal{H}\bigl(x(t),u(t),\lambda(t)\bigr)&=\frac{1}{2}\bigl( y'(t)Qy(t) + u'(t)Ru(t)\bigr)\nonumber\\
    &+\lambda'\bigl(Ax(t)+Bu(t)\bigr)\nonumber\\
    &=\frac{1}{2}\bigl(x'C'QCx +2u'D'QCx \\
    &+ u'(R+D'QD)u\bigr)+\lambda'(Ax+Bu).\nonumber
\end{align}
By the Pontryagin's principle~\cite{pont}, the necessary conditions of optimality are given based on (\ref{eqn:H}) as follows
\begin{subequations}\label{eqn:pontrya}
\begin{align}
    &\frac{\partial}{\partial x} \mathcal{H}\bigl(x(t),u(t),\lambda(t)\bigr)=-\frac{d}{dt}{\lambda(t)},\quad \lambda(t_f)=0\label{eqn:pontrya(x)}\\
    &\frac{\partial}{\partial u} \mathcal{H}\bigl(x(t),u(t),\lambda(t)\bigr)=0\label{eqn:pontrya(u)}.
\end{align}
\end{subequations}
Since $\mathcal{H}(x,u,\lambda)$ is convex in $x,u$ for any $\lambda\in\mathbb{R}^n$, the joint optimality condition (\ref{eqn:pontrya}) is also sufficient. The first condition (\ref{eqn:pontrya(x)}) can be written as
\begin{align}\label{eqn:step1}
    -\frac{d}{dt}\lambda(t)&=\frac{\partial}{\partial x} \mathcal{H}\bigl(x(t),u(t),\lambda(t)\bigr)\nonumber\\
    &=A'\lambda(t)+ C'QCx(t) + C'QDu(t),
\end{align}
where $\lambda(t_f)=0$. Equation (\ref{eqn:step1}) may be written in reversed time $\tau=t_f-t$ as follows
\begin{align}\label{eqn:step2}
    \frac{d}{d\tau}\hat{\lambda}(\tau)&=A'\hat{\lambda}(\tau)+C'QC\hat{x}(\tau) + C'QD\hat{u}(\tau),
\end{align}
where $\hat{\lambda}(0)=0$. As the system (\ref{eqn:system}) is externally symmetric, one may invoke Proposition~\ref{prop:exsym_ss} to write
\begin{align*}
    \frac{d}{d\tau}\hat{\lambda}(\tau)&=TS\hat{\lambda}(\tau)+T B \Sigma_e QC\hat{x}(\tau) + T B \Sigma_e QD\hat{u}(\tau),\nonumber
\end{align*}
where, multiplying the both sides by $T^{-1}$ gives
\begin{align}\label{eqn:step4}
    &\frac{d}{d\tau}\bigl(T^{-1}\hat{\lambda}(\tau)\bigr)\nonumber\\
    &= S\hat{\lambda}(\tau)+B \Sigma_e QC\hat{x}(\tau) + B \Sigma_e QD\hat{u}(\tau)\nonumber\\
    &=A T^{-1}\hat{\lambda}(\tau) + B \Sigma_e Q \bigl( C\hat{x}(\tau) + D\hat{u}(\tau)\bigr),
\end{align}
where $\hat{\lambda}(0)=0$. Equation (\ref{eqn:step4}) indicates that the signal $T^{-1}\hat{\lambda}$ coincides with the state trajectory of the system (\ref{eqn:system}) in the range $[0,t_f]$ driven by the input signal $\Sigma_e Q \left( C\hat{x} + D\hat{u}\right)$ from the origin. The output signal associated with such a trajectory is given by
\begin{equation}\label{eqn:what}
    \hat{w}(\tau):=C T^{-1}\hat{\lambda}(\tau) + D \Sigma_e Q \bigl( C\hat{x}(\tau) + D\hat{u}(\tau)\bigr).
\end{equation}
Next, we write the second optimality condition (\ref{eqn:pontrya(u)}) as
\begin{align}\label{eqn:step5}
        \frac{\partial}{\partial u} \mathcal{H}\bigl(x(t),u(t),\lambda(t)\bigr)&=D'QCx(t)+(R+D'QD)u(t)\nonumber\\
        &+B'\lambda(t)=0, \quad t\in[0,t_f]\nonumber
\end{align}
which, after time reversal, takes the form
\begin{equation}\label{eqn:step6}
     D'QC\hat{x}(\tau)+(R+D'QD)\hat{u}(\tau)+B'\hat{\lambda}(\tau)=0,
\end{equation}
where $\tau=t_f-t\in[0,t_f]$. By using Proposition~\ref{prop:exsym_ss} in (\ref{eqn:step6}), one has
\begin{align}\label{eqn:step6.5}
     0&=\Sigma_e D \Sigma_e Q C\hat{x}(\tau)+(R+\Sigma_e D \Sigma_e QD)\hat{u}(\tau)\nonumber\\
     &+\Sigma_e C T^{-1}\hat{\lambda}(\tau)\nonumber\\
     &=\Sigma_e D \Sigma_e Q \bigl( C\hat{x}(\tau)+D\hat{u}(\tau)\bigr)+\Sigma_e C T^{-1}\hat{\lambda}(\tau)+ R\hat{u}(\tau)\nonumber\\
     &=\Sigma_e \hat{w}(\tau)+R\hat{u}(\tau),
\end{align}
where $\hat{w}$ is defined in (\ref{eqn:what}). Solving equation (\ref{eqn:step6.5}) for $\hat{u}$ and reversing the time $\tau$ back to $t$ yields 
\begin{equation}\label{eqn:u=}
    u(t)=-R^{-1}\Sigma_e w(t).
\end{equation}
Equation (\ref{eqn:u=}) expresses the optimal control input in terms of a system trajectory as observed from the outputs, without requiring the explicit system dynamics. The joint conditions (\ref{eqn:step4}), (\ref{eqn:what}) and (\ref{eqn:u=}) are equivalent to the necessary and sufficient conditions of optimality (\ref{eqn:pontrya}) and can be described as the fixed-point equation
$$
u=\mathcal{T}(u),
$$
in which $\mathcal{T}$ is shown as a block diagram in Figure~\ref{fig:operator}.
\end{proof}

The following algorithm solves the problem (\ref{eqn:optimization}) by finding the fixed point of (\ref{eqn:T}). The optimal solution $u^{\star}$ is obtained within a numerical accuracy specified by $\epsilon_0$. The input parameters are $t_f>0$, $Q\succeq 0$, $R\succ 0$, and the output is $u_{k+1}$. The only tunning parameter is the step size $\alpha\in(0,1]$.

\begin{algorithm}
\caption{A model-free iterative solution to the optimal control problem (\ref{eqn:optimization}):}\label{alg}
\begin{enumerate}
    \item Choose an initial control signal $u_0$ and let $k=0$.
    \item Apply $u=u_k$ to system (\ref{eqn:system}) with the initial condition $x(0)=x_0$ and observe the output $y=y_k$.
    \item Apply $u=\Sigma_e Q\hat{y}_k$ to system (\ref{eqn:system}) with the initial condition $x(0)=0$ and observe the output $y=\hat{w}_k$.
    \item Set $\mathcal{T}(u_k)=-R^{-1} \Sigma_e w_k$.
    \item Update the control input as $u_{k+1}=(1-\alpha)u_k+\alpha\mathcal{T}(u_k)$.
    \item If $\Vert u_{k+1}-u_k\Vert < \epsilon_0$ stop. Otherwise, set $k\gets k+1$ and go back to step~2.
\end{enumerate}
\end{algorithm}

To study the convergence of Algorithm~\ref{alg} we first require the following lemma.

\begin{lemma}\label{lem:nn}
    Let Assumption~\ref{ass:sym} hold. The linear operator $-R^{1/2}\mathcal{S}R^{-1/2}$ is non-negative.
\end{lemma}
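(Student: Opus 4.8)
The plan is to reduce the quadratic form defining non-negativity to something manifestly non-negative, with external symmetry doing the essential work. First I would substitute the definition of $\mathcal{S}$ from (\ref{eqn:S}) and cancel $R^{1/2}R^{-1}=R^{-1/2}$ to get $-R^{1/2}\mathcal{S}R^{-1/2}=R^{-1/2}\Sigma_e\mathcal{J}\mathcal{G}\Sigma_e Q\mathcal{J}\mathcal{G}R^{-1/2}$. Since $R^{1/2}$ and $R^{-1/2}$ are symmetric, the associated pointwise multiplication operators are self-adjoint, so for arbitrary $u\in\mathcal{L}_2^m(0,t_f)$ and $w:=R^{-1/2}u$ I can move the leftmost factor across the inner product to obtain
$$
\langle -R^{1/2}\mathcal{S}R^{-1/2}u,\,u\rangle_{t_f}=\langle \Sigma_e\mathcal{J}\mathcal{G}\Sigma_e Q\mathcal{J}\mathcal{G}w,\,w\rangle_{t_f}.
$$
It then suffices to show the right-hand side is non-negative for every $w$.

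The crux of the argument is an identity that turns external symmetry into a relation between $\mathcal{G}$ and its adjoint, namely $\mathcal{G}^*=\Sigma_e\mathcal{J}\mathcal{G}\mathcal{J}\Sigma_e$, equivalently $\Sigma_e\mathcal{J}\mathcal{G}=\mathcal{G}^*\Sigma_e\mathcal{J}$. I would derive it by first noting that (\ref{eqn:G'=G}) is equivalent to the time-domain relation $g'(t)=\Sigma_e g(t)\Sigma_e$ on the impulse response. Writing $\mathcal{G}$ as the integral operator with causal kernel $g(t-\tau)\mathbf{1}(t-\tau)$, Proposition~\ref{prop:adjoint} gives $\mathcal{G}^*u(t)=\int_t^{t_f} g'(\tau-t)u(\tau)d\tau$, while a direct change of variables in (\ref{eqn:Gu_linearoperator}) yields $\mathcal{J}\mathcal{G}\mathcal{J}u(t)=\int_t^{t_f} g(\tau-t)u(\tau)d\tau$; substituting $g'(\tau-t)=\Sigma_e g(\tau-t)\Sigma_e$ into the former produces the claimed identity. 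This is the step I expect to be the main obstacle, since it requires tracking the kernel support carefully through the time reversal (the causal kernel of $\mathcal{G}$ maps to the anti-causal kernel of $\mathcal{G}^*$), and it is exactly here that Assumption~\ref{ass:sym} enters.

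With the identity in hand, the remainder is bookkeeping. I would replace the leftmost factor $\Sigma_e\mathcal{J}\mathcal{G}$ by $\mathcal{G}^*\Sigma_e\mathcal{J}$, giving $\langle \mathcal{G}^*\Sigma_e\mathcal{J}\Sigma_e Q\mathcal{J}\mathcal{G}w,\,w\rangle_{t_f}$. Because $\mathcal{J}$ commutes with the constant matrices $\Sigma_e$ and $Q$ and satisfies $\mathcal{J}^2=\mathcal{I}$, while $\Sigma_e^2=I$, the middle block collapses as $\Sigma_e\mathcal{J}\Sigma_e Q\mathcal{J}=Q$. Applying the adjoint relation $\langle \mathcal{G}^*v,\,w\rangle_{t_f}=\langle v,\,\mathcal{G}w\rangle_{t_f}$ with $v=Q\mathcal{G}w$ then gives
$$
\langle -R^{1/2}\mathcal{S}R^{-1/2}u,\,u\rangle_{t_f}=\langle Q\mathcal{G}w,\,\mathcal{G}w\rangle_{t_f}=\int_0^{t_f}(\mathcal{G}w)'(t)\,Q\,(\mathcal{G}w)(t)\,dt\geq 0,
$$
where the final inequality follows immediately from $Q\succeq 0$. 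Since $u$ was arbitrary, this establishes that $-R^{1/2}\mathcal{S}R^{-1/2}$ is non-negative and completes the proof.
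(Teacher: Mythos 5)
Your proof is correct and takes essentially the same route as the paper's: both invoke Proposition~\ref{prop:adjoint} together with the time-domain form $g'(t)=\Sigma_e g(t)\Sigma_e$ of external symmetry to obtain the key adjoint identity (the paper's $(\mathcal{J}\mathcal{G})^*=\Sigma_e\mathcal{J}\mathcal{G}\Sigma_e$ is equivalent to your $\mathcal{G}^*=\Sigma_e\mathcal{J}\mathcal{G}\mathcal{J}\Sigma_e$, since $\mathcal{J}^*=\mathcal{J}$, $\mathcal{J}^2=\mathcal{I}$ and $\mathcal{J}$ commutes with constant matrices), and then collapse the quadratic form to $\langle Q\,\mathcal{G}w,\mathcal{G}w\rangle_{t_f}\geq 0$ using $Q\succeq 0$. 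The only differences are cosmetic: you compute $\mathcal{G}^*$ and commute the time reversal afterward, while the paper computes $(\mathcal{J}\mathcal{G})^*$ directly, so your final inner product lacks the (norm-preserving) factor $\mathcal{J}$.
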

\begin{proof}
From (\ref{eqn:Gu_linearoperator}) one can write
$$
\mathcal{J}\mathcal{G}u(t)=\mathcal{G}u(t_f-t)=\int_0^{t_f-t}g(t_f-t-\tau)u(\tau)d\tau
$$
for all $t\in[0,t_f]$, where invoking Proposition~\ref{prop:adjoint} yields
\begin{align*}
(\mathcal{J}\mathcal{G})^*u(t)&=\int_0^{t_f}\Sigma_e g(t_f-t-\tau) \Sigma_e u(\tau)d\tau \\
&= (\Sigma_e \mathcal{J} \mathcal{G} \Sigma_e)u(t)
\end{align*}
due to (\ref{eqn:G'=G}). Therefore
\begin{equation}\label{eqn:adjsym}
    (\mathcal{J}\mathcal{G})^*=\Sigma_e \mathcal{J} \mathcal{G} \Sigma_e,
\end{equation}
which can be used to write
    \begin{align*}
        &\langle u,-R^{1/2}\mathcal{S}R^{-1/2}u\rangle_{t_f} \\
        &=\langle u,R^{-1/2}\Sigma_e\mathcal{J}\mathcal{G}\Sigma_e Q \mathcal{J} \mathcal{G} R^{-1/2}u\rangle_{t_f} \\
        &=\langle u,R^{-1/2}(\mathcal{J}\mathcal{G})^* Q \mathcal{J} \mathcal{G} R^{-1/2}u\rangle_{t_f} \\
        &=\langle \mathcal{J}\mathcal{G} R^{-1/2} u, Q \mathcal{J} \mathcal{G} R^{-1/2}u\rangle_{t_f} \geq 0, \quad \forall u\in\mathcal{L}^m_2(0,t_f)
    \end{align*}
where the last inequality holds due to the non-negativity of $Q$ as a linear operator in $\mathcal{L}^m_2(0,t_f)$.
\end{proof}

Now we are ready to prove the convergence of Algorithm~\ref{alg}.

\begin{theorem}\label{thm}
    Let Assumption~\ref{ass:sym} hold and $u_0\in\mathcal{L}_2^m(0,t_f)$. Algorithm~\ref{alg} converges to the optimal solution of problem (\ref{eqn:optimization}) in the space $\mathcal{L}_2^m(0,t_f)$, if $\alpha\in\bigl(0, \min\lbrace \bar{\alpha},1\rbrace \bigr)$ where
    \begin{equation}\label{eqn:conv_cond}
    \bar{\alpha}=2/\bigl(\Vert R^{1/2}\mathcal{S}R^{-1/2}\Vert^2_{2,t_f}+1\bigr).
    \end{equation} 
\end{theorem}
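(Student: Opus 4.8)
The plan is to read Step~5 of Algorithm~\ref{alg} as the relaxed (averaged) fixed-point iteration $u_{k+1}=\Phi(u_k)$ with $\Phi(u)=(1-\alpha)u+\alpha\mathcal{T}(u)$, and to show that $\Phi$ is a contraction in a suitably weighted $\mathcal{L}_2^m(0,t_f)$ norm whenever $\alpha$ lies in the stated interval. Since $\alpha>0$, a point $u$ is fixed by $\Phi$ if and only if $u=\mathcal{T}(u)$, so Lemma~\ref{lem:Tu=u} identifies fixed points of $\Phi$ with optimal solutions of (\ref{eqn:optimization}). Consequently a single application of the Banach fixed-point theorem will simultaneously deliver existence, uniqueness, and geometric (linear-rate) convergence to $u^{\star}$, with no separate existence argument required.

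First I would pass to the linear part via $\mathcal{T}=\mathcal{S}+r_{x_0}$ from (\ref{eqn:S}), which gives $\Phi(u)-\Phi(v)=\bigl[(1-\alpha)\mathcal{I}+\alpha\mathcal{S}\bigr](u-v)$; the affine offset $r_{x_0}$ cancels, so only $\mathcal{S}$ governs contractivity. To bring Lemma~\ref{lem:nn} into play, which concerns $-R^{1/2}\mathcal{S}R^{-1/2}$ rather than $-\mathcal{S}$ itself, I introduce the change of variables $v_k=R^{1/2}u_k$. Under this substitution the iteration becomes $v_{k+1}=\tilde{M}v_k+\alpha R^{1/2}r_{x_0}$ with linear part $\tilde{M}=(1-\alpha)\mathcal{I}+\alpha\tilde{\mathcal{S}}$ and $\tilde{\mathcal{S}}=R^{1/2}\mathcal{S}R^{-1/2}$. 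Because $R^{1/2}$ and $R^{-1/2}$ are bounded with bounded inverses, convergence in the $v$-variable is equivalent to convergence of $u_k$ in the original norm.

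The crux is bounding $\Vert\tilde{M}\Vert_{2,t_f}$. Expanding in the Hilbert space yields $\Vert\tilde{M}e\Vert_{2,t_f}^2=(1-\alpha)^2\Vert e\Vert_{2,t_f}^2+2\alpha(1-\alpha)\langle e,\tilde{\mathcal{S}}e\rangle_{t_f}+\alpha^2\Vert\tilde{\mathcal{S}}e\Vert_{2,t_f}^2$. Here Lemma~\ref{lem:nn} gives $\langle e,\tilde{\mathcal{S}}e\rangle_{t_f}\leq 0$, while the requirement $\alpha\leq 1$ makes the coefficient $2\alpha(1-\alpha)\geq 0$, so the cross term is nonpositive and may be discarded; bounding $\Vert\tilde{\mathcal{S}}e\Vert_{2,t_f}^2\leq\Vert\tilde{\mathcal{S}}\Vert_{2,t_f}^2\Vert e\Vert_{2,t_f}^2$ then gives $\Vert\tilde{M}\Vert_{2,t_f}^2\leq(1-\alpha)^2+\alpha^2\Vert\tilde{\mathcal{S}}\Vert_{2,t_f}^2$. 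A short computation shows this is strictly below $1$ exactly when $\alpha<\bar{\alpha}=2/\bigl(\Vert\tilde{\mathcal{S}}\Vert_{2,t_f}^2+1\bigr)$, which together with $\alpha\leq 1$ is precisely the hypothesis $\alpha\in\bigl(0,\min\{\bar{\alpha},1\}\bigr)$ of (\ref{eqn:conv_cond}).

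It then follows that $\tilde{M}$ is a contraction on the complete space $\mathcal{L}_2^m(0,t_f)$, so the Banach fixed-point theorem produces a unique $v^{\star}$ with $\Vert v_k-v^{\star}\Vert_{2,t_f}\leq\Vert\tilde{M}\Vert_{2,t_f}^{k}\Vert v_0-v^{\star}\Vert_{2,t_f}\to 0$. Transforming back, $u^{\star}=R^{-1/2}v^{\star}$ is the unique fixed point of $\mathcal{T}$, hence the optimal solution by Lemma~\ref{lem:Tu=u}, and $u_k\to u^{\star}$ at the same linear rate. I expect the main obstacle to be the bookkeeping of the cross term's sign: it is exactly the interplay between the non-negativity supplied by Lemma~\ref{lem:nn} and the constraint $\alpha\in(0,1]$ (which forces $1-\alpha\geq 0$) that licenses dropping that term. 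Were $\alpha$ permitted to exceed $1$, this elementary sign argument would break down and a sharper spectral estimate of $\tilde{M}$ would be needed instead, which is the reason the admissible interval is capped at $\min\{\bar{\alpha},1\}$.
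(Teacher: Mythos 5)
Your proposal is correct and follows essentially the same route as the paper: the identical similarity transform $R^{1/2}\mathcal{S}R^{-1/2}$, the same expansion of $\Vert\mathcal{M}_\alpha u\Vert_{2,t_f}^2$ with the cross term discarded via Lemma~\ref{lem:nn} and $\alpha(1-\alpha)\geq 0$, and the same bound $(1-\alpha)^2+\alpha^2\Vert R^{1/2}\mathcal{S}R^{-1/2}\Vert_{2,t_f}^2<1$ yielding $\bar{\alpha}$. The only (harmless, indeed slightly tidier) difference is that you package the estimate through the Banach fixed-point theorem, which additionally furnishes existence and uniqueness of $u^{\star}$, whereas the paper presupposes $u^{\star}$ from Lemma~\ref{lem:Tu=u} and iterates the error directly.
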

\begin{proof}
Accoridng to Lemma~\ref{lem:Tu=u}, the optimal solution $u^{\star}$ of Problem~(\ref{eqn:optimization}) satisfies
$$
u^{\star}=(1-\alpha)u^{\star}+\alpha\mathcal{T}(u^{\star}).
$$
Hence, subtracting $u^{\star}$ from the both sides of the update step in Algortihm~\ref{alg} yields
\begin{align}\label{(1-a)I+aS}
    &u_{k+1}-u^{\star}\nonumber\\
    &=(1-\alpha)(u_k-u^{\star})+\alpha \bigl(\mathcal{T}(u_k)-\mathcal{T}(u^{\star})\bigr) \nonumber\\
    &=(1-\alpha)(u_k-u^{\star})+\alpha \bigl(\mathcal{S}(u_k)+r_{x_0}-\mathcal{S}(u^{\star})-r_{x_0}\bigr)\nonumber\\
    &=(1-\alpha)(u_k-u^{\star})+\alpha\mathcal{S}(u_k-u^{\star})\nonumber\\
    &=\bigl( (1-\alpha)\mathcal{I}+\alpha\mathcal{S}\bigr) (u_k-u^{\star})\\
    &=R^{-1/2}\left((1-\alpha)\mathcal{I}+\alpha R^{1/2}\mathcal{S} R^{-1/2}\right) R^{1/2}(u_k-u^{\star}),\nonumber
\end{align}
which implies that for all $k\in\mathbb{N}$ one has
\begin{equation}\label{eqn:u+=Mu0}
        u_k-u^{\star}=
    R^{-1/2}\mathcal{M}_\alpha^k R^{1/2}(u_0-u^{\star}),
\end{equation}
where
\begin{equation}\label{eqn:M}
    \mathcal{M}_\alpha:=(1-\alpha)\mathcal{I}+\alpha R^{1/2}\mathcal{S} R^{-1/2}.
\end{equation}
From (\ref{eqn:u+=Mu0}), it is deduced that
\begin{equation}\label{eqn:u+<|H|u}
    \Vert u_k-u^{\star} \Vert_{2,t_f} \leq
     \left\Vert \mathcal{M}_\alpha\right\Vert_{2,t_f}^k
     \Vert R^{-1/2} \Vert_{2} \Vert R^{1/2} \Vert_{2}
    \Vert u_0-u^{\star} \Vert_{2,t_f}.
\end{equation}
To estimate the norm of ${\mathcal M}_{\alpha}$, we note that for any $u\in\mathcal{L}^m_2(0,t_f)$, one has
\begin{align}\label{eqn:<><>}
    \left\Vert \mathcal{M}_\alpha u\right\Vert_{2,t_f}^2&=
    (1-\alpha)^2\langle u,u\rangle_{t_f}\nonumber\\
    &+2\alpha(1-\alpha)\left\langle u,R^{1/2}\mathcal{S} R^{-1/2} u\right\rangle_{t_f}\nonumber\\
    &+\alpha^2 \left\langle R^{1/2}\mathcal{S} R^{-1/2}u,R^{1/2}\mathcal{S} R^{-1/2}u\right\rangle_{t_f}.
\end{align}
Since $\alpha(1-\alpha)>0$ holds for $\alpha\in(0,1)$,
one has $\alpha(1-\alpha)\left\langle u,R^{1/2}\mathcal{S} R^{-1/2} u\right\rangle_{t_f} \leq 0$ from Lemma~\ref{lem:nn}. Therefore, it follows from (\ref{eqn:<><>}) that
\begin{align}\label{Mproofstep}
    &\left\Vert \mathcal{M}_\alpha u\right\Vert_{2,t_f}\leq\nonumber\\
    & \left(
    (1-\alpha)^2 
    +\alpha^2 \Vert R^{1/2}\mathcal{S} R^{-1/2} \Vert_{2,t_f}^2 \right)^{1/2}\Vert u\Vert_{2,t_f}.
\end{align}
Choosing the step size $\alpha$ in (\ref{Mproofstep}) as
$\alpha< \bar{\alpha}$ results in
\begin{equation}\label{|M|<1}
    \left\Vert \mathcal{M}_\alpha \right\Vert_{2,t_f} <1
\end{equation}
and therefore, $\lim_{k\to +\infty}\Vert u_{k}-u^{\star}\Vert_{2,t_f} =0$ holds from inequality (\ref{eqn:u+<|H|u}).
\end{proof}

\begin{remark}[Complexity]
    Both the memory usage and computational complexity of Algorithm~\ref{alg} are dominated by the storage of the two continuous-time signals $u_k$ and $y_k$. These signals can be stored by discretizing the continuous time intervals using $N$ time samples each. The computational complexity of Algorithm~\ref{alg} per iteration is then given by $O(Nm^2)$.
\end{remark}

\begin{remark}[Model-free]
Algorithm~\ref{alg} does not require the system model and is run by only observing the input-output data of the system.
\end{remark}

\begin{remark}[Convergence rate]
Theorem~\ref{thm} ensures the linear convergence rate $O\left(\Vert\mathcal{M}_\alpha\Vert_{2,t_f}^k\right)$ in Algorithm~\ref{alg} as $k\to\infty$.
\end{remark}

\begin{remark}[Global convergence]
Algorithm~\ref{alg} always converges by choosing a small enough step size $\alpha$, regardless of the system dynamics (\ref{eqn:system}), the cost function (\ref{eqn:optimization}) or the initial control signal $u_0\in\mathcal{L}^m_{2}(0,t_f)$.
\end{remark}

\begin{remark}[Step size]
The optimal step size in Algorithm~\ref{alg} is given by $\alpha^{\star}= \bar{\alpha}/2$, computing which requires $\mathcal{S}$ in (\ref{eqn:conv_cond}), and hence, the system model. When only the system norm is known, it is possible to obtain a safe step size as $\alpha \in \bigl(0,\min\lbrace 1,\beta\rbrace\bigr)$, where
$$
\beta=2/\bigl(1+\Vert G \Vert^4_{H_\infty} \lambda^2_{\max} Q/\lambda^2_{\min}R\bigr).
$$
\end{remark}

\subsection{Measurement noise}\label{sec:FH_noise}
In practice, the measured signals can be contaminated by noise, as operator $\mathcal{T}$ in Algorithm~\ref{alg} is evaluated online by running experiments on the system. To study how this affects Algorithm~\ref{alg}, consider the system (\ref{eqn:system}) again, with a modified output equation as follows
\begin{equation}\label{eqn:noisy}
    y(t)=Cx(t)+Du(t)+\eta(t),
\end{equation}
where $\eta(t)\in\mathbb{R}^m$ is a random noise. We do not require the components of $\eta (t)$ to be independent of each other or that $\eta(t)$ be identically distributed for $t\in[0,t_f]$. However, we make the following assumption on the measurement noise $\eta$ in this section.

\begin{assumption}\label{ass:noise_FH}
    The stochastic process $\eta$ is square-integrable almost surely, \emph{i.e.}, $\mathbb{P}\bigl(\eta\in\mathcal{L}_2^m(0,t_f)\bigr)=1$, and it has a zero mean $\mathbb{E}\bigl(\eta(t)\bigr)=0$ and a bounded variance $\Vert {\rm Cov}(\eta;t)\Vert_2 \leq\sigma^2<\infty$ for $t\in[0,t_f]$.
\end{assumption}

As a consequence of the noisy outputs (\ref{eqn:noisy}), operator $\mathcal{T}$ in the $k$-th iteration of Algorithm~\ref{alg} is replaced by its stochastic version
\begin{equation}\label{eqn:Ttilde}
    \tilde{\mathcal{T}}_k:=-R^{-1}\Sigma_e \mathcal{J}(\mathcal{G}+\eta_{2,k})\Sigma_e Q \mathcal{J}(\mathcal{G}+d_{x_0}+\eta_{1,k}),
\end{equation} 
where $\eta_{1,k}$ and $\eta_{2,k}$ are independent measurement noises that appear in the first and second experiments with the system (\ref{eqn:system}) in iteration $k$ (see Figure~\ref{fig:operator}). Operator (\ref{eqn:Ttilde}) can be decomposed into the deterministic affine operator $\mathcal{T}$ (\ref{eqn:T}) and a stochastic signal $\zeta_k$ as follows 
$$
\tilde{\mathcal{T}}_k=\mathcal{T}+\zeta_k,
$$
where
\begin{align}\label{zeta}
    \zeta_k&:=-R^{-1}\Sigma_e \mathcal{J}\mathcal{G}\Sigma_e Q \mathcal{J} \eta_{1,k}-R^{-1}\Sigma_e \mathcal{J}\eta_{2,k}.
\end{align}
Under noisy measurements, Algorithm~\ref{alg} returns
\begin{equation}\label{eqn:estima}
    \tilde{u}_k(t):=u_k(t)+\nu_k(t)
\end{equation}
in iteration $k$, where $u_k(t)$ is what the algorithm returns in the nominal case in the absence of measurement noise, and $\nu_k(t)$ is a random variable. The next theorem shows that $\nu_k(t)$ has zero mean and a bounded variance. 

\begin{theorem}\label{thm:noiseFH}
    Let Assumptions~\ref{ass:sym} and \ref{ass:noise_FH} hold, let $\alpha\in\bigl(0, \min\lbrace \bar{\alpha},1\rbrace \bigr)$ where $\bar{\alpha}$ is given by (\ref{eqn:conv_cond}), and assume a deterministic initialization in Algorithm~\ref{alg} ($\nu_0=0$). Then estimation (\ref{eqn:estima}) is unbiased, \emph{i.e.},
    \begin{equation}\label{E=0FH}
        \mathbb{E}\bigl(\nu_k\bigr)=0, \quad k\in\mathbb{N}
    \end{equation}
    and is of bounded variance.
\end{theorem}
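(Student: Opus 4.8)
The plan is to analyze the error $\nu_k=\tilde u_k-u_k$ from (\ref{eqn:estima}) through a linear recursion driven by a zero-mean additive noise, treating its mean and its second moment separately. First I would set up this recursion. The decisive structural fact, visible in (\ref{eqn:Ttilde}) and (\ref{zeta}), is that the noise enters additively and independently of the argument of $\tilde{\mathcal{T}}_k$: since the plant is linear and $\eta_{1,k},\eta_{2,k}$ are added to the outputs, we have $\tilde{\mathcal{T}}_k(u)=\mathcal{T}(u)+\zeta_k$ with $\zeta_k$ from (\ref{zeta}) \emph{not} depending on $u$. Subtracting the nominal update $u_{k+1}=(1-\alpha)u_k+\alpha\mathcal{T}(u_k)$ from the noisy update $\tilde u_{k+1}=(1-\alpha)\tilde u_k+\alpha\tilde{\mathcal{T}}_k(\tilde u_k)$ and using the affine decomposition $\mathcal{T}=\mathcal{S}+r_{x_0}$ of (\ref{eqn:S}), the constant part $r_{x_0}$ cancels and only the linear part $\mathcal{S}$ acts on the difference, giving
\begin{equation*}
\nu_{k+1}=\mathcal{N}_\alpha\nu_k+\alpha\zeta_k,\qquad \mathcal{N}_\alpha:=(1-\alpha)\mathcal{I}+\alpha\mathcal{S}=R^{-1/2}\mathcal{M}_\alpha R^{1/2},
\end{equation*}
with $\mathcal{M}_\alpha$ as in (\ref{eqn:M}). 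Since $\nu_0=0$, unrolling yields $\nu_k=\alpha\sum_{j=0}^{k-1}\mathcal{N}_\alpha^{\,k-1-j}\zeta_j$.

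The unbiasedness (\ref{E=0FH}) is then immediate: each $\zeta_j$ is a bounded-linear image of the zero-mean noises $\eta_{1,j},\eta_{2,j}$ by (\ref{zeta}), so $\mathbb{E}(\zeta_j)=0$, and since $\mathcal{N}_\alpha$ is deterministic, taking expectations in the unrolled sum gives $\mathbb{E}(\nu_k)=0$ for every $k$. This step uses only the zero-mean property, not independence.

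For the bounded-variance claim I would measure the variance by the expected squared norm $\mathbb{E}(\Vert\nu_k\Vert_{2,t_f}^2)$ and work with the reweighted error $\mu_k=R^{1/2}\nu_k$, which satisfies $\mu_{k+1}=\mathcal{M}_\alpha\mu_k+\alpha R^{1/2}\zeta_k$, hence $\mu_k=\alpha\sum_{j=0}^{k-1}\mathcal{M}_\alpha^{\,k-1-j}R^{1/2}\zeta_j$. This reweighting is the crux, because $\Vert\mathcal{M}_\alpha\Vert_{2,t_f}=\rho<1$ by (\ref{|M|<1}) and Theorem~\ref{thm}, whereas $\mathcal{N}_\alpha$ need not be a contraction in the unweighted norm. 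Because the noises of distinct experiments are independent, the summands are pairwise uncorrelated and zero-mean, so the cross terms drop out and
\begin{equation*}
\mathbb{E}\bigl(\Vert\mu_k\Vert_{2,t_f}^2\bigr)=\alpha^2\sum_{j=0}^{k-1}\mathbb{E}\bigl(\Vert\mathcal{M}_\alpha^{\,k-1-j}R^{1/2}\zeta_j\Vert_{2,t_f}^2\bigr)\leq \alpha^2\sum_{j=0}^{k-1}\rho^{2(k-1-j)}\,\mathbb{E}\bigl(\Vert R^{1/2}\zeta_j\Vert_{2,t_f}^2\bigr).
\end{equation*}

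It remains to bound $\mathbb{E}(\Vert R^{1/2}\zeta_j\Vert_{2,t_f}^2)$ uniformly in $j$. As $\zeta_j$ is the image of $(\eta_{1,j},\eta_{2,j})$ under the bounded operators appearing in (\ref{zeta}), it suffices to bound $\mathbb{E}(\Vert\eta_{i,j}\Vert_{2,t_f}^2)$; by Tonelli's theorem and the zero-mean property this equals $\int_0^{t_f}{\rm tr}\,{\rm Cov}(\eta_{i,j};t)\,dt\leq m\sigma^2 t_f$, where the trace estimate uses ${\rm tr}\,S\leq m\lambda_{\max}S=m\Vert S\Vert_2$ together with Assumption~\ref{ass:noise_FH}. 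This produces a constant $\bar\Gamma$ with $\mathbb{E}(\Vert R^{1/2}\zeta_j\Vert_{2,t_f}^2)\leq\bar\Gamma$ for all $j$, and summing the geometric series gives $\mathbb{E}(\Vert\mu_k\Vert_{2,t_f}^2)\leq\alpha^2\bar\Gamma/(1-\rho^2)$, whence $\mathbb{E}(\Vert\nu_k\Vert_{2,t_f}^2)\leq\Vert R^{-1/2}\Vert_2^2\,\alpha^2\bar\Gamma/(1-\rho^2)$, a bound independent of $k$. The main obstacle is conceptual rather than computational: one must recognize that the error is driven by an additive noise independent of the iterate, so that the recursion stays genuinely linear in $\nu_k$ and no multiplicative noise feedback arises, and then reweight by $R^{1/2}$ to recover geometric decay in $\Vert\cdot\Vert_{2,t_f}$; given these, the uniform second-moment bound follows from the cross terms vanishing and the elementary trace estimate of the per-iteration noise energy.
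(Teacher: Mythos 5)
Your proof is correct and takes essentially the same route as the paper's: the same additive-noise decomposition $\tilde{\mathcal{T}}_k=\mathcal{T}+\zeta_k$, the same recursion $\nu_{k+1}=\bigl((1-\alpha)\mathcal{I}+\alpha\mathcal{S}\bigr)\nu_k+\alpha\zeta_k$ unrolled from $\nu_0=0$, the same $R^{1/2}$-reweighting to exploit $\Vert\mathcal{M}_\alpha\Vert_{2,t_f}<1$, and the same pairwise-independence-plus-trace estimate (the paper's Lemma~\ref{lem:cov}) summed as a geometric series. The only cosmetic difference is that you bound $\mathbb{E}\bigl(\Vert\nu_k\Vert_{2,t_f}^2\bigr)$ directly whereas the paper bounds $\int_0^{t_f}\Vert{\rm Cov}(\nu_k;t)\Vert_2\,dt$, and these are equivalent formulations of bounded variance related by the same trace inequalities.
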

\begin{proof}
    see appendix~\ref{sec:proof_thm:noiseFH}.
\end{proof}

\subsection{Random initialization}\label{sec:random_ini}
Algorithm~\ref{alg} requires resetting the system to the same initial condition $x(0)=x_0$ at every iteration. This requirement, which is also present in most iterative-learning-control schemes, can pose some challenges in applications where the initial state of the system cannot be easily set or estimated. A possible approach to this problem is to use a random initial condition within a bounded neighborhood of $x_0$ for each iteration, run Algorithm~\ref{alg} for $\overline{\omega}$ independent trials, and average the solutions. In particular, assume the initial condition in the second step of the $k$-th iteration of Algorithm~1 (the first experiment in Figure~\ref{fig:operator}) is perturbed as
\begin{equation}\label{pert}
    x(0)=x_0+\rho_k,
\end{equation}
where $\rho_k$ is a random variable with the following assumption.

\begin{assumption}\label{ass:inicond}
    The random variables $\rho_k$ are zero-mean, independent, and identically distributed, and there is some $s>0$ such that $\Vert\rho_k\Vert_{\infty}\leq s$ holds almost surely for all $k$.
\end{assumption}

Under the perturbed initial condition (\ref{pert}), the operator $\mathcal{T}$ in the $k$-th iteration of Algorithm~1 is replaced by $\mathcal{T}+r_{\rho_k}$, where
$$
r_{\rho_k}=-R^{-1}\Sigma_e \mathcal{J}\mathcal{G}\Sigma_e Q \mathcal{J}d_{\rho_k},
$$
and $d_{\rho_k}(t)=C\exp(At)\rho_k$ is the natural response of the system (4) to the initial condition $x(0)=\rho_k$. In this case, Algorithm~1 returns
\begin{equation}\label{eqn:estimaini}
    \tilde{u}_k:=u_k+\nu_k
\end{equation}
in iteration $k$, where $u_k$ is what the algorithm returns in the nominal case (where the initial condition is set correctly $x(0)=x_0$ in all iterations), and $\nu_k$ is a random variable. The next theorem shows that $\nu_k(t)$ has a zero mean and a bounded variance. Therefore, one can average $\tilde{u}_k$ obtained from $\overline{\omega}$ independent trials to recover $u_k$. 

\begin{theorem}\label{thm:noiseini}
    Let Assumptions~\ref{ass:sym} and \ref{ass:inicond} hold, let $\alpha\in\bigl(0, \min\lbrace \bar{\alpha},1\rbrace \bigr)$ where $\bar{\alpha}$ is given by (\ref{eqn:conv_cond}), and assume a deterministic initialization in Algorithm~\ref{alg} ($\nu_0=0$). Then estimation (\ref{eqn:estimaini}) is unbiased, \emph{i.e.},
    \begin{equation}\label{E=0FHini}
        \mathbb{E}\bigl(\nu_k\bigr)=0, \quad k\in\mathbb{N}
    \end{equation}
    and is of bounded variance.
\end{theorem}
\begin{proof}
    see appendix~\ref{sec:proof_thm:noiseini}.
\end{proof}

\section{Extension to infinite-horizon problems and state feedback}\label{sec:infH}
In this section, we use the developed technique to solve the problem (\ref{eqn:optimization}) when its horizon length is infinite and derive the optimal state feedback gain. By letting $D=0$ and $Q_x=C'QC$, the output regulation problem~(\ref{eqn:optimization}) is set in the standard form of an LQR problem~(\ref{eqn:cost_intro}). As we focus on infinite-horizon problems in this section, we also make the following assumption which ensures a stabilizing control law exists and the optimal value is finite in (\ref{eqn:cost_intro}).
\begin{assumption}\label{ass:infH}
    The pair $(A,B)$ is stabilizable and $(A,\sqrt{Q_x})$ is detectable.
\end{assumption}
It is well known that when $t_f\to\infty$ and Assumption~\ref{ass:infH} is satisfied, the solution to (\ref{eqn:cost_intro}) is given by~\cite[\S3.4]{lewis}
        \begin{align}
        0&=A'P_{\infty}+P_{\infty}A-P_{\infty}BR^{-1}B'P_{\infty}+Q_x,\label{eqn:ARE}\\
        K_{\infty}&=R^{-1}B'P_{\infty}, \label{eqn:lqrGain_IH}\\
        u_{\infty}^{\star}(t)&=-K_{\infty}x(t), \label{eqn:u=-Kx_IH}
        \end{align}
where $P_{\infty}$ is the unique positive definite solution to (\ref{eqn:ARE}). Solving the above equations requires knowing the system model. In contrast, Algorithm~\ref{alg} can be used to obtain the optimal gain $K_{\infty}$ in (\ref{eqn:u=-Kx_IH}) in a model-free fashion. This is realized by first using Algorithm~\ref{alg} to obtain the optimal open-loop control signal. Then this input signal is applied to the system and a few data points are collected from the optimal state trajectory. These data points are then used to obtain the optimal state feedback gain $K_{\infty}$. For the measured data points to be reliable for this process, we need a uniform convergence in Algorithm~\ref{alg}. Hence, $\infty$-norm is used instead of $2$-norm in this section.

\begin{lemma}\label{lem:convergence_in_Linf}
    Let Assumption~\ref{ass:sym} hold and $u_0\in\mathcal{L}^m_{\infty}(0,t_f)$. Then $\lim_{k\to+\infty}\Vert u_k-u^{\star}\Vert_{\infty,t_f}=0$ holds in Algorithm~\ref{alg} for all $\alpha\in(0,1]$, if
    \begin{equation}\label{eqn:conv_cond_inf}
       \Vert  Q \Vert_\infty \Vert G\Vert_{\rm pk}^2 \Vert R ^{-1}\Vert_{\infty}< 1.
    \end{equation}
\end{lemma}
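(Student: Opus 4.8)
The plan is to replicate the contraction argument behind Theorem~\ref{thm}, but to carry it out directly in the Banach space $\mathcal{L}^m_\infty(0,t_f)$, where no inner-product structure is available. Consequently the non-negativity route of Lemma~\ref{lem:nn} and the conjugation by $R^{1/2}$ are both unavailable and must be replaced by a plain operator-norm bound. First I would reproduce the error recursion exactly as in (\ref{(1-a)I+aS}): since $u^\star$ is a fixed point of $\mathcal{T}$ by Lemma~\ref{lem:Tu=u} and the affine part $r_{x_0}$ cancels upon subtraction, the update step of Algorithm~\ref{alg} yields
\[
u_{k+1}-u^\star = \bigl((1-\alpha)\mathcal{I}+\alpha\mathcal{S}\bigr)(u_k-u^\star),
\]
so that $u_k-u^\star = \mathcal{N}_\alpha^k(u_0-u^\star)$ with $\mathcal{N}_\alpha := (1-\alpha)\mathcal{I}+\alpha\mathcal{S}$, everything now measured in the $\infty$-norm.

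The core of the argument is to show that $\mathcal{S}$ is a contraction in $\mathcal{L}^m_\infty(0,t_f)$. Using the decomposition (\ref{eqn:S}) together with submultiplicativity of the operator norm, the unit norm $\Vert\mathcal{J}\Vert_{\infty,t_f}=1$ of the time-reversal operator, and the fact that a signature matrix $\Sigma_e$ acts as a multiplication operator of unit $\infty$-norm, I would bound
\[
\Vert\mathcal{S}\Vert_{\infty,t_f} \le \Vert R^{-1}\Vert_\infty\,\Vert Q\Vert_\infty\,\Vert\mathcal{G}\Vert_{\infty,t_f}^2,
\]
where I use that a constant matrix acting as a multiplication operator on $\mathcal{L}^m_\infty$ has operator norm equal to its matrix $\infty$-norm. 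Invoking Proposition~\ref{prop:monogain} and the identification $\Vert\mathcal{G}\Vert_{\infty,\infty}=\Vert G\Vert_{\rm pk}$ recorded after (\ref{system_normtf}), I replace $\Vert\mathcal{G}\Vert_{\infty,t_f}$ by the peak-to-peak gain $\Vert G\Vert_{\rm pk}$, so that hypothesis (\ref{eqn:conv_cond_inf}) delivers $\Vert\mathcal{S}\Vert_{\infty,t_f}<1$.

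With $\alpha\in(0,1]$ the scalar $1-\alpha$ is non-negative, so the triangle inequality gives $\Vert\mathcal{N}_\alpha\Vert_{\infty,t_f}\le (1-\alpha)+\alpha\Vert\mathcal{S}\Vert_{\infty,t_f} < (1-\alpha)+\alpha = 1$, uniformly over all admissible step sizes. Therefore $\Vert u_k-u^\star\Vert_{\infty,t_f}\le \Vert\mathcal{N}_\alpha\Vert_{\infty,t_f}^k\,\Vert u_0-u^\star\Vert_{\infty,t_f}\to 0$, which is the claim; completeness of $\mathcal{L}^m_\infty(0,t_f)$ together with the contractivity of the affine map $\mathcal{T}=\mathcal{S}+r_{x_0}$ simultaneously guarantees that the fixed point $u^\star$ to which the iterates converge exists and is unique, and by Lemma~\ref{lem:Tu=u} it is the optimal control.

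The main obstacle I anticipate is bookkeeping rather than conceptual: verifying that the multiplicative factors $\Sigma_e$, $Q$ and $R^{-1}$ contribute exactly $1$, $\Vert Q\Vert_\infty$ and $\Vert R^{-1}\Vert_\infty$ as operators on $\mathcal{L}^m_\infty(0,t_f)$, and that the step $\Vert\mathcal{G}\Vert_{\infty,t_f}\le\Vert G\Vert_{\rm pk}$ is legitimate---this tacitly requires $\Vert G\Vert_{\rm pk}<\infty$, which is precisely what makes (\ref{eqn:conv_cond_inf}) a meaningful condition. Once these identifications are in place, no delicate estimate remains.
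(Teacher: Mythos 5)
Your proposal is correct and follows essentially the same route as the paper: the error recursion $u_{k+1}-u^\star=\bigl((1-\alpha)\mathcal{I}+\alpha\mathcal{S}\bigr)(u_k-u^\star)$ from (\ref{(1-a)I+aS}), the bound $\Vert\mathcal{S}\Vert_{\infty,t_f}\leq\Vert R^{-1}\Vert_\infty\Vert Q\Vert_\infty\Vert\mathcal{G}\Vert_{\infty,t_f}^2$ with unit-norm $\mathcal{J}$ and signature factors, and the replacement $\Vert\mathcal{G}\Vert_{\infty,t_f}\leq\Vert G\Vert_{\rm pk}$ via Proposition~\ref{prop:monogain}, giving the contraction factor $1-\alpha+\alpha\Vert R^{-1}\Vert_\infty\Vert G\Vert_{\rm pk}^2\Vert Q\Vert_\infty<1$. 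Your closing appeal to completeness and the Banach fixed-point property is harmless but unnecessary, since Lemma~\ref{lem:Tu=u} already identifies $u^\star$ as the fixed point.
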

\begin{proof}
From (\ref{(1-a)I+aS}) one can write
\begin{align}\label{eqn:infproof1}
&\Vert u_{k+1}-u^{\star}\Vert_{\infty,t_f}\nonumber\\
&\leq \Vert (1-\alpha)\mathcal{I}+\alpha\mathcal{S}\Vert_{\infty,t_f} \Vert u_k-u^{\star}\Vert_{\infty,t_f} \nonumber\\
&\leq \left(1-\alpha+\alpha\Vert \mathcal{S}\Vert_{\infty,t_f}\right)
\Vert u_k-u^{\star}\Vert_{\infty,t_f}\\
&\leq \left(1-\alpha+\alpha \Vert R^{-1} \Vert_{\infty} \Vert G\Vert_{\rm pk}^2 \Vert Q\Vert_{\infty}\right)
\Vert u_k-u^{\star}\Vert_{\infty,t_f},\nonumber
\end{align}
where we have used $\Vert R^{-1}\Sigma_e\Vert_{\infty} = \Vert R^{-1}\Vert_{\infty}$, $\Vert \Sigma_e Q\Vert_{\infty} = \Vert Q\Vert_{\infty}$ and $\Vert\mathcal{J}\Vert_{\infty,t_f}=1$, and the inequality
\begin{equation}\label{Gtf<Gpk}
\Vert \mathcal{G}\Vert_{\infty,t_f} \leq 
\Vert \mathcal{G}\Vert_{\infty,\infty} =
\Vert G\Vert_{\rm pk}    
\end{equation}
that holds from Proposition~\ref{prop:monogain}. Therefore, condition (\ref{eqn:conv_cond_inf}) ensures that
\begin{align*}
&\lim_{k\to+\infty}\Vert u_k-u^{\star}\Vert_{\infty,t_f}\leq\\
&\lim_{k\to+\infty}\left(1-\alpha+\alpha \Vert R^{-1} \Vert_{\infty} \Vert {G}\Vert_{\rm pk}^2 \Vert Q\Vert_{\infty}\right)^k
\Vert u_0-u^{\star}\Vert_{\infty,t_f}\\
&=0.
\end{align*}
\end{proof}

\begin{remark}
The condition (\ref{eqn:conv_cond_inf}) ensures the system is BIBO stable and all the signals shown in Figure~\ref{fig:operator} remain bounded during the operation of Algorithm~\ref{alg} as $t_f$ grows large.
\end{remark}

Once Algorithm~\ref{alg} has converged, the optimal open-loop control signal can be used to obtain the optimal state feedback gain without requiring the system model as shown in the following theorem.

\begin{theorem}\label{thm:infHe}
Let Assumptions~\ref{ass:sym} and \ref{ass:infH} hold, assume (\ref{eqn:conv_cond_inf}) and  $u_0\in\mathcal{L}^m_{\infty}(0,t_f)$. Let $u_k$ be the control signal obtained from Algorithm~\ref{alg} after $k$ iterations with the horizon length $t_f$ and let $x_k$ be the state trajectory of the system (\ref{eqn:system}) under the control input $u_k$. Then the optimal infinite-horizon state-feedback gain $K_{\infty}$ satisfies
\begin{align}\label{eqn:Xvec(K)=-u}
    \left(X_{t_1:t_n}'\otimes I_m\right)\;{\rm vec}(K_{\infty})&= -{\rm vec}(U_{t_1:t_n}) \nonumber\\
    &+ \varepsilon(k,t_f,t_1:t_n),
\end{align}
where
\begin{align}\label{eqn:X,U_t1:tn}
    X_{t_1:t_n}&=\begin{bmatrix}
x_k(t_1)& x_k(t_2)&\dots&x_k(t_{n})   
\end{bmatrix},\nonumber\\
U_{t_1:t_n}&=\begin{bmatrix}
u_k(t_1)& u_k(t_2)&\dots&u_k(t_{n})
\end{bmatrix}
\end{align}
are data matrices with $n$ (arbitrary) samples in the range $t_i\in[0,\bar{t}]$ and
    \begin{equation}\label{eqn:eps_decay}
\Vert\varepsilon(k,t_f,t_1:t_n)\Vert_{\infty}=O\bigl(\exp(-l_1k)+\exp(-l_2t_f)\bigr),
    \end{equation}
    as $\begin{bmatrix}k & t_f\end{bmatrix}'\to \infty$, where $l_1$, $l_2$ are two positive constants given by (\ref{l1}) and (\ref{l2}) respectively.
\end{theorem}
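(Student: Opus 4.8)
The plan is to compare the data $(u_k,x_k)$ produced by Algorithm~\ref{alg} against the \emph{infinite-horizon} optimal pair $(u_\infty^\star,x_\infty^\star)$, for which the static feedback law (\ref{eqn:u=-Kx_IH}) holds exactly at every instant. First I would observe that since $u_\infty^\star(t)=-K_\infty x_\infty^\star(t)$ pointwise, collecting the samples at $t_1,\dots,t_n$ into data matrices gives the exact identity $U_{t_1:t_n}^\star=-K_\infty X_{t_1:t_n}^\star$, where the starred matrices are built from $(u_\infty^\star,x_\infty^\star)$. Vectorizing via ${\rm vec}(K_\infty X^\star)=(X^{\star\prime}\otimes I_m){\rm vec}(K_\infty)$ then yields $(X_{t_1:t_n}^{\star\prime}\otimes I_m){\rm vec}(K_\infty)=-{\rm vec}(U_{t_1:t_n}^\star)$. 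Substituting the measured matrices $X_{t_1:t_n},U_{t_1:t_n}$ for their ideal counterparts produces exactly (\ref{eqn:Xvec(K)=-u}) with
$$\varepsilon=\bigl[(X_{t_1:t_n}'-X_{t_1:t_n}^{\star\prime})\otimes I_m\bigr]{\rm vec}(K_\infty)+{\rm vec}(U_{t_1:t_n}-U_{t_1:t_n}^\star),$$
so the whole problem reduces to bounding the sample-wise errors $\Vert u_k(t_i)-u_\infty^\star(t_i)\Vert$ and $\Vert x_k(t_i)-x_\infty^\star(t_i)\Vert$ on the fixed window $[0,\bar t]$.

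For the control error I would split $u_k-u_\infty^\star=(u_k-u^\star)+(u^\star-u_\infty^\star)$, where $u^\star$ is the finite-horizon optimum on $[0,t_f]$. The first difference is the algorithm's iteration error: the contraction estimate in the proof of Lemma~\ref{lem:convergence_in_Linf} gives $\Vert u_k-u^\star\Vert_{\infty,t_f}\le\rho^k\Vert u_0-u^\star\Vert_{\infty,t_f}$ with $\rho=1-\alpha+\alpha\Vert R^{-1}\Vert_\infty\Vert G\Vert_{\rm pk}^2\Vert Q\Vert_\infty<1$, which is the source of the $\exp(-l_1k)$ term with $l_1=-\ln\rho$. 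The second difference measures how far the finite-horizon optimum sits from the infinite-horizon one; restricted to $[0,\bar t]$ with $\bar t<t_f$ fixed, standard LQR theory shows that the solution $P(t)$ of the Riccati equation (\ref{eqn:riccati_FH}) with terminal value $P(t_f)=0$ converges to $P_\infty$ exponentially in $t_f-t$, at a rate set by the stable closed-loop spectrum of $A-BK_\infty$ (Hurwitz by Assumption~\ref{ass:infH}). This yields $\Vert u^\star(t)-u_\infty^\star(t)\Vert=O(\exp(-l_2t_f))$ uniformly on $[0,\bar t]$ and fixes $l_2$.

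For the state error I would use that $x_k$ and $x_\infty^\star$ obey the same dynamics from the common initial condition $x_0$, so $x_k(t_i)-x_\infty^\star(t_i)=\int_0^{t_i}e^{A(t_i-\tau)}B[u_k(\tau)-u_\infty^\star(\tau)]\,d\tau$, an integral that only sees the control error on $[0,t_i]\subseteq[0,\bar t]$. Since condition (\ref{eqn:conv_cond_inf}) forces BIBO stability and hence a Hurwitz $A$, the quantity $\int_0^\infty\Vert e^{As}B\Vert\,ds$ is finite and independent of $t_f$, so $\Vert x_k(t_i)-x_\infty^\star(t_i)\Vert$ inherits the same $O(\exp(-l_1k)+\exp(-l_2t_f))$ bound as the control error on the window. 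Feeding both sample-wise bounds into the expression for $\varepsilon$ above, and using that $K_\infty$ and the Kronecker factor $I_m$ are fixed, gives $\Vert\varepsilon\Vert_\infty=O(\exp(-l_1k)+\exp(-l_2t_f))$, which is (\ref{eqn:eps_decay}).

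The main obstacle I anticipate is the horizon term: making the convergence $u^\star\to u_\infty^\star$ exponential and uniform on $[0,\bar t]$ and pinning down $l_2$. This rests on the exponential convergence of the finite-horizon Riccati solution to $P_\infty$, which I would justify through the Hamiltonian two-point boundary value structure, separating the stable and anti-stable invariant subspaces so that the terminal condition $P(t_f)=0$ contributes a term decaying like $\exp(-l_2(t_f-t))$ with $l_2$ tied to the spectral abscissa of $A-BK_\infty$. Care is also needed that $\bar t$ stays fixed while $t_f\to\infty$, so that the sampling window remains bounded away from the terminal time; otherwise the near-terminal discrepancy of the finite-horizon control, which does not decay, would spoil the bound.
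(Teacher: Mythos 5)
Your proposal is correct and reaches the theorem by a genuinely different decomposition, although the analytic core coincides with the paper's. You compare the measured data $(u_k,x_k)$ to the ideal infinite-horizon pair $(u_\infty^\star,x_\infty^\star)$, for which $U^\star_{t_1:t_n}=-K_\infty X^\star_{t_1:t_n}$ holds exactly, and push all error into sample-wise differences; this forces you to control two extra objects: the gap $u^\star-u_\infty^\star$ between the finite- and infinite-horizon optima on $[0,\bar t]$ (which conceals a Gronwall-type comparison of the two closed-loop trajectories driven by $K(t)$ and $K_\infty$ from the same $x_0$ --- a step you dispatch as ``standard LQR theory'' but would need to write out), and the state gap via the open-loop convolution formula. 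The paper never introduces $x_\infty^\star$ or $u_\infty^\star$ as signals: it pivots on the algebraic identity $-K_\infty x_k(t)=u_k(t)+\epsilon(k,t_f,t)$, obtained by writing $u^\star(t)=-K(t)x(t)$ with the time-varying finite-horizon gain and substituting $K(t)=K_\infty+R^{-1}B'\hat e^P(t_f-t)$, so the residual is expressed directly in terms of the iteration error $e^u_k$ and the Riccati error $\hat e^P$ with no trajectory comparison and no Gronwall step. Both routes then rest on the same two estimates, and your sketches of them match the paper: the contraction bound from Lemma~\ref{lem:convergence_in_Linf} yields (\ref{l1}), and the stable/anti-stable splitting of the Hamiltonian yields the exponential Riccati bound; note only that the paper's constant is $l_2=2\min_i \operatorname{Re}[\Lambda]_{ii}$ in (\ref{l2}) --- the factor two arises because $V(t_f-t)$ in (\ref{V}) carries $\exp\bigl(-\Lambda(t_f-t)\bigr)$ on \emph{both} sides --- whereas you tie the rate to the spectral abscissa of $A-BK_\infty$ without this factor. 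One small repair: your claim that (\ref{eqn:conv_cond_inf}) ``forces a Hurwitz $A$'' is not justified without minimality (BIBO stability does not preclude unstable uncontrollable or unobservable modes), but it is also unnecessary, since your convolution integral only runs over the fixed compact window $[0,\bar t]$, where $\Vert\mathcal{G}_x\Vert_{\infty,\bar t}$ is finite regardless --- this is exactly how the paper bounds its constant $r_1$. Your closing caution that $\bar t$ must stay fixed as $t_f\to\infty$ is well placed and is implicit in the paper's restriction $t_i\in[0,\bar t]$.
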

\begin{proof}
The finite-horizon Riccati equation (\ref{eqn:riccati_FH}) can be written in reversed time as follows
\begin{equation}\label{eqn:riccati_FH_hat}
    \frac{d}{d\tau}\hat{P}(\tau)=A'\hat{P}(\tau)+\hat{P}(\tau)A-\hat{P}(\tau)BR^{-1}B'\hat{P}(\tau)+Q_x,
\end{equation}
where $\tau=t_f-t$ is the mirrored time. Under Assumption~\ref{ass:infH}, there holds $\lim_{\tau\to +\infty} \hat{P}(\tau)=P_{\infty}$ where $P_{\infty}$ is the unique positive definite solution of the algebraic Riccati equation (\ref{eqn:ARE})~\cite[Theorem~3.4.1]{lewis}. Consequently, the optimal gain (\ref{eqn:lqrGain_FH}) of the finite-horizon problem also converges to the optimal gain $K_{\infty}$ (\ref{eqn:lqrGain_IH}) of the infinite horizon problem, \emph{i.e.}, $\lim_{\tau\to +\infty} \hat{K}(\tau)=K_{\infty}$.
Define
\begin{align}\label{eqn:e^p,e^u}
    \hat{e}^P(\tau)&:=\hat{P}(\tau)-P_{\infty},\\
    e^u_k(t)&:=u_k(t)-u^{\star}(t),\nonumber
\end{align}
and let $\mathcal{G}_x$ be the linear operator mapping the system input to its state trajectory starting from the origin. It follows that
\begin{align}\label{precistep}
    -K(t)x_k(t)&=-K(t)x(t)-K(t)\bigl(x_k(t)-x(t) \bigr) \nonumber\\
    &=-K(t)x(t)-K(t)\mathcal{G}_x e^u_k(t) \nonumber\\
    &=u^{\star}(t)-K(t)\mathcal{G}_x e^u_k(t) \nonumber\\
    &=u_k(t)-e^u_k(t)-K(t)\mathcal{G}_x e^u_k(t).
\end{align}
Since the optimal gain satisfies
$$
K(t)=K_{\infty}+R^{-1}B'\hat{e}^P(t_f-t),
$$
we obtain
\begin{equation}\label{-Kx=utbar}
-K_{\infty}x_k(t)= u_k(t)+\epsilon(k,t_f,t), \quad t\in[0,t_f]
\end{equation}
from (\ref{precistep}), where $\epsilon(k,t_f,t)\in\mathbb{R}^m$ is given by
\begin{align}\label{vareps}
    \epsilon(k,t_f,t)&:=-e^u_k(t)-K(t)\mathcal{G}_x e^u_k(t)\nonumber\\
    &+R^{-1}B'\hat{e}^P(t_f-t)x_k(t).
\end{align}
By applying $u_k$ to the system (\ref{eqn:system}) and sampling $n$ points on the state trajectory in the range $t_i\in[0,\bar{t}]$, we can write
\begin{equation}\label{unvec}
- K_{\infty} X_{t_1:t_n} = U_{t_1:t_n}+E_{t_1:t_n},    
\end{equation}
where
$$
E_{t_1:t_n}=\begin{bmatrix}\epsilon(k,t_f,t_1)&\epsilon(k,t_f,t_2)&\dots&\epsilon(k,t_f,t_n)\end{bmatrix}.
$$
Vectorizing equation (\ref{unvec}) gives (\ref{eqn:Xvec(K)=-u}), where $\varepsilon(k,t_f,t_1:t_n):=-{\rm vec}(E_{t_1:t_n})$. Note that
\begin{align}\label{||eps||}
    &\Vert\varepsilon(k,t_f,t_1:t_n)\Vert_{\infty}\nonumber\\
    &= \max_{1\leq i\leq n}\Vert \epsilon(k,t_f,t_i)\Vert_{\infty} \nonumber\\
    &\leq \max_{t\in[0,\bar{t}]}\Vert \epsilon(k,t_f,t)\Vert_{\infty} \nonumber\\
    &\leq r_1 \Vert e^u_k \Vert_{\infty,t_f}+r_2 \max_{t\in[0,\bar{t}]}\Vert \hat{e}^P(t_f-t) \Vert_{\infty}
\end{align}
holds from (\ref{vareps}), where
\begin{align*}
r_1&=\Vert \mathcal{G}_x \Vert_{\infty,\bar{t}} \max_{t\in[0,\bar{t}]}\Vert K(t) \Vert_{\infty} +1,\\
r_2&=\Vert R^{-1} B' \Vert_{\infty}\Vert x_k \Vert_{\infty,\bar{t}}.
\end{align*}
Since $r_1$ and $r_2$ are bounded, it is sufficient to bound the error norms in (\ref{||eps||}) to prove (\ref{eqn:eps_decay}). We begin with $$\hat{e}^P(t_f-t)=\hat{P}(\tau)-P_{\infty}=P(t)-P_{\infty},$$ as the difference between the solutions of Riccati differential and algebraic equations (\ref{eqn:riccati_FH_hat}) and (\ref{eqn:ARE}). The analytic solution of the Riccati differential equation~(\ref{eqn:riccati_FH}) can be written as follows~\cite[\S3.4]{lewis}
\begin{align}\label{RiccatiW}
    P(t)&=\bigl( W_{21}+W_{22}V(t_f-t)\bigr)\bigl( W_{11}+W_{12}V(t_f-t)\bigr)^{-1},
\end{align}
where
\begin{equation}\label{V}
V(t_f-t)=-\exp\bigl( -\Lambda(t_f-t)\bigr) W_{22}^{-1}W_{21}\exp\bigl( -\Lambda(t_f-t)\bigr).    
\end{equation}
In (\ref{V}), $\Lambda$ is a diagonal matrix consisting of the eigenvalues of the Hamiltonian matrix
    \begin{equation}\label{Hmatrix}
        H=\begin{bmatrix}
            A & -BR^{-1}B' \\
            -Q_x & -A'
        \end{bmatrix}
    \end{equation}
in the right-half plane, and
$$
W=\begin{bmatrix}W_{11}&W_{12}\\W_{21}&W_{22}\end{bmatrix}\in\mathbb{R}^{2n\times 2n} \quad (W_{ij}\in\mathbb{R}^{n\times n})
$$
is a matrix consisting of the eigenvectors of $H$ sorted such that
$$
W^{-1}H W=\begin{bmatrix}
   -\Lambda& 0\\0&\Lambda  
\end{bmatrix}.
$$
Note that Assumption~\ref{ass:infH} guarantees that $H$ does not have any eigenvalues on the imaginary axis (this is shown in \cite{hamiltonLQR} for $R=I$, but can also be shown to be true in the general case $R\succ 0$ by input transformation). Therefore, matrix $-\Lambda$ is Hurwitz stable and $\lim_{t_f\to+\infty}V(t_f-t)=0$ holds in (\ref{V}). This results in $P_{\infty}=W_{21}W_{11}^{-1}$ from (\ref{RiccatiW}), and therefore,
\begin{align}\label{thmproof_ep}
    &\hat{e}^P(t_f-t)=\nonumber\\
    &\bigl(W_{21}+W_{22}V(t_f-t)\bigr)\bigl(W_{11}+W_{12}V(t_f-t)\bigr)^{-1}\nonumber\\
    &-W_{21}W_{11}^{-1}.
\end{align}
Since $\lim_{t_f\to+\infty}V(t_f-t)=0$, there is some sufficiently large $\bar{t}_f$ such that $\Vert W_{11}^{-1}W_{12}V(t_f-t)\Vert_{\infty}<1$ holds for all $t_f>\bar{t}_f$ and $t\in[0,\bar{t}]$. Hence, it is deduced from (\ref{thmproof_ep}) that
\begin{align*}
    &\hat{e}^P(t_f-t)=\nonumber\\
    &\bigl(W_{21}+W_{22}V(t_f-t)\bigr)\Bigl(I-W_{11}^{-1}W_{12}V(t_f-t)\nonumber\\
    &+\bigl(W_{11}^{-1}W_{12}V(t_f-t)\bigr)^2 +\dots \Bigr) W_{11}^{-1}-W_{21}W_{11}^{-1},
\end{align*}
for all $t_f>\bar{t}_f, t\in[0,\bar{t}]$, from which we obtain the following asymptotic relation
\begin{align*}
    &\Vert \hat{e}^P(t_f-t)\Vert_{\infty}\leq \\
    &\Vert V(t_f-t)\Vert_{\infty}
    \bigl(\Vert W_{21} \Vert_{\infty}\Vert W_{12} \Vert_{\infty}\Vert W_{11}^{-1} \Vert^2_{\infty} \\
    &+\Vert W_{22} \Vert_{\infty}\Vert W_{11}^{-1} \Vert_{\infty}\bigr)+O\left(\Vert V(t_f-t)\Vert_{\infty}^2\right)\nonumber\\
    &=O\left(\Vert V(t_f-t)\Vert_{\infty}\right),\quad t_f\to+\infty
\end{align*}
for any $t\in[0,\bar{t}]$. Therefore,
\begin{equation}\label{asym_p}
    \max_{t\in[0,\bar{t}]}\Vert \hat{e}^P(t_f-t)\Vert_{\infty}=O\bigl(\exp(-l_2 t_f)\bigr)
\end{equation}
holds as $t_f\to+\infty$, where
\begin{equation}\label{l2}
    l_2=2\min_{1\leq i \leq n}\operatorname{Re}\lbrace [\Lambda]_{ii} \rbrace,
\end{equation}
and $[\Lambda]_{ii}$'s are the $n$ eigenvalues of the Hamiltonian matrix (\ref{Hmatrix}) in right-half plane. 
On the other hand, it follows from (\ref{eqn:infproof1}) that
\begin{align}\label{asym_u}
    \Vert e^u_k \Vert_{\infty,t_f}&\leq\bigl(1-\alpha+\alpha\Vert R^{-1}\Vert_{\infty} \Vert G\Vert_{\rm pk}^2\Vert Q\Vert_{\infty}\bigr)^k\Vert e^u_0\Vert_{\infty,t_f} \nonumber\\
    &= O\bigl(\exp(-l_1 k)\bigr),\quad k\to+\infty
\end{align}
where
\begin{equation}\label{l1}
    l_1=-\log \bigl( 1-\alpha+\alpha\Vert R^{-1}\Vert_{\infty} \Vert{G}\Vert_{\rm pk}^2\Vert Q\Vert_{\infty} \bigr).
\end{equation}
The proof is complete by using (\ref{asym_p}) and (\ref{asym_u}) in (\ref{||eps||}).
\end{proof}

Theorem~\ref{thm:infHe} indicates that the remainder term $\varepsilon(k,t_f,t_1:t_n)$ vanishes as $k,t_f\to +\infty$ with an exponential decay rate in both $k$ and $t_f$. Therefore, the optimal infinite-horizon gain $K_{\infty}$ can be approximated by $K_{k,t_f}$, where
\begin{equation}\label{eqn:K_k,tf}
        \left(X_{t_1:t_n}'\otimes I_m\right)\;{\rm vec}(K_{k,t_f})= -{\rm vec}(U_{t_1:t_n}).
\end{equation}
Hence, the optimal gain is found by increasing $k$ (the number of iterations that Algorithm~\ref{alg} is run for) and increasing $t_f$ (the horizon length) as
$$
\lim_{k,t_f\to +\infty}K_{k,t_f}=K_{\infty}.
$$

\begin{remark}\label{remrank}
    Equation (\ref{eqn:K_k,tf}) has a unique solution if and only if $X_{t_1:t_n}$ is full-rank.
\end{remark}
The above rank condition is similar to the one used for solving discrete-time optimal control problems~\cite{depresis}. Assuming $k,t_f\to \infty$ and choosing equidistant time samples as $t_i=(i-1)\bar{t}/n$, matrix $X_{t_1:t_n}$ can be written as
$$
X_{t_1:t_n}=
\begin{bmatrix}
     x_0 & \exp(A_{\rm cl}\bar{t}/n)x_0 & \dots & \exp\left((n-1)A_{\rm cl}\bar{t}/n\right)x_0
\end{bmatrix},
$$
where $A_{\rm cl}=A-BK_{\infty}$. Hence, $X_{t_1:t_n}$ is full-rank if and only if the pair $\bigl(\exp(A_{\rm cl}\bar{t}/n),x_0\bigr)$ is controllable, which is true if $x_0$ is not orthogonal to any of the left eigenvectors of $\exp(A_{\rm cl}\bar{t}/n)$. In this case, the matrix $X_{t_1:t_n}$ is full rank generically. Therefore, with a random initial condition (drawn from any continuous probability distribution), equation (\ref{eqn:K_k,tf}) has a unique solution almost surely.


\subsection{Measurement noise}
To be able to implement the obtained state-feedback control law, we assume the system states are available through estimation or measurement, which may be contaminated by a zero-mean additive noise $\eta_x(t)\in\mathbb{R}^m$ as follows
$$
    \tilde{x}(t)=x(t)+\eta_x(t).
$$
For the measurements (\ref{eqn:noisy}) made during the learning process in Algorithm~\ref{alg}, we make the following assumption.


\begin{assumption}\label{ass:noise_IH}
    The stochastic process $\eta$ has a zero mean, \emph{i.e.}, $\mathbb{E}\bigl(\eta(t)\bigr)=0$ for $t\in[0,t_f]$, and there is some $e>0$ such that $\Vert\eta(t)\Vert_{\infty}\leq e $ holds almost surely.
\end{assumption}

Assumption~\ref{ass:noise_IH} entails that $\eta(t)$ also has a bounded variance. Similar to the finite horizon case with square-integrable noises considered in Section~\ref{sec:FH_noise}, the next theorem proves that estimation (\ref{eqn:estima}) is also unbiased with bounded variance when $t_f\to+\infty$, provided that the measurement noise is bounded.

\begin{theorem}\label{thm:noise_inf}
Let Assumptions~\ref{ass:sym}, \ref{ass:infH} and \ref{ass:noise_IH} hold. Assume~(\ref{eqn:conv_cond_inf}) and that $u_0\in\mathcal{L}^m_{\infty}(0,t_f)$ is deterministic. Estimation (\ref{eqn:estima}) is unbiased, \emph{i.e.},
    \begin{equation}\label{eqn:E(nu)=0_inf}
        \mathbb{E}\bigl(\nu_k\bigr)=0, \quad k\in\mathbb{N}
    \end{equation}
    and it is of bounded variance as $t_f\to\infty$.
\end{theorem}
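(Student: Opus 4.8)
\textbf{Proof proposal for Theorem~\ref{thm:noise_inf}.}

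The plan is to mirror the structure of the finite-horizon noise analysis (Theorem~\ref{thm:noiseFH}) while tracking the dependence on $t_f$ carefully, since here the horizon grows without bound. First I would recall that under noisy measurements the algorithm operates with the stochastic operator $\tilde{\mathcal{T}}_k=\mathcal{T}+\zeta_k$, where $\zeta_k$ is the stochastic perturbation defined in (\ref{zeta}), now built from the bounded noise of Assumption~\ref{ass:noise_IH} rather than the square-integrable noise of Assumption~\ref{ass:noise_FH}. The noise enters Algorithm~\ref{alg} only through the two online experiments in steps~2 and~3, so the recursion for the error $\nu_k=\tilde{u}_k-u_k$ should again take the affine-in-noise form
\begin{equation*}
\nu_{k+1}=\bigl((1-\alpha)\mathcal{I}+\alpha\mathcal{S}\bigr)\nu_k+\alpha\zeta_k,
\end{equation*}
with $\nu_0=0$ by the deterministic-initialization hypothesis. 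Unrolling this recursion expresses $\nu_k$ as a finite sum $\alpha\sum_{j=0}^{k-1}\mathcal{M}_\alpha^{\,k-1-j}\zeta_j$ in the $R$-weighted sense used in (\ref{eqn:u+=Mu0}), where $\mathcal{M}_\alpha$ is the contraction from (\ref{eqn:M}).

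For unbiasedness (\ref{eqn:E(nu)=0_inf}) I would take expectations through this linear recursion. Since $\mathcal{M}_\alpha$ and $\mathcal{S}$ are deterministic linear operators and each $\zeta_j$ is a linear image of the zero-mean noises $\eta_{1,j},\eta_{2,j}$, linearity of expectation together with $\mathbb{E}(\eta(t))=0$ gives $\mathbb{E}(\zeta_j)=0$, hence $\mathbb{E}(\nu_k)=0$ inductively. The key structural point is that the noise enters additively and is not multiplied by other noise terms in $\zeta_j$ itself; the product $\mathcal{J}\mathcal{G}\Sigma_e Q\mathcal{J}\eta_{1,k}$ in (\ref{zeta}) is linear in $\eta_{1,k}$, and the separate term is linear in $\eta_{2,k}$, so no cross-noise bias of the ADP type $\mathbb{E}(\tilde\Theta_k)\neq\Theta_k$ arises. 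This is where external symmetry pays off: it is what allowed $\mathcal{T}$ to be realized by forward experiments (step~3 runs $\Sigma_e Q\hat y_k$ through the \emph{same} system), so the noise appears additively at the output rather than multiplicatively in an estimated gain.

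For the bounded-variance claim as $t_f\to\infty$, I would bound $\mathrm{Cov}(\nu_k;t)$ through the same sum. Using independence of $\zeta_j$ across iterations (they come from fresh experiments) the covariance splits into a sum of $k$ contributions, each of the form $\alpha^2\mathcal{M}_\alpha^{\,k-1-j}$ applied to $\mathrm{Cov}(\zeta_j)$; since $k$ is fixed this is a finite sum, so it suffices to show each term stays bounded uniformly in $t_f$. The operator $\mathcal{M}_\alpha$ has $\mathcal{L}_\infty$-induced norm below one under condition (\ref{eqn:conv_cond_inf}) — this is exactly what Lemma~\ref{lem:convergence_in_Linf} established via the peak-to-peak bound (\ref{Gtf<Gpk}), whose right-hand side $\Vert G\Vert_{\mathrm{pk}}$ is independent of $t_f$. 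The remaining ingredient is a uniform-in-$t_f$ bound on the variance of $\zeta_j$: here I would use the boundedness of the noise ($\Vert\eta(t)\Vert_\infty\le e$ almost surely) together with the peak-to-peak gain to control $\Sigma_e\mathcal{J}\mathcal{G}\Sigma_e Q\mathcal{J}\eta_{1,j}$ in the $\infty$-norm, again invoking (\ref{Gtf<Gpk}) so that the system-gain factor does not blow up with the horizon.

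The main obstacle I anticipate is precisely this last uniformity: in the infinite-horizon setting one must prevent the accumulated noise from growing with $t_f$, and an $\mathcal{L}_2$-style energy bound would diverge (the noise energy $\Vert\eta\Vert_{2,t_f}$ grows with the horizon). This is why the statement switches to bounded noise and the $\infty$-norm framework of Section~\ref{sec:infH}: the peak-to-peak gain $\Vert G\Vert_{\mathrm{pk}}$ is a horizon-independent constant, and BIBO stability guaranteed by (\ref{eqn:conv_cond_inf}) keeps every signal in Figure~\ref{fig:operator} bounded. I would therefore phrase the variance estimate pointwise in $t$ and in the $\infty$-norm, concluding that $\Vert\mathrm{Cov}(\nu_k;t)\Vert_2$ admits a bound depending on $\alpha$, $k$, $e$, $\Vert G\Vert_{\mathrm{pk}}$, $\Vert Q\Vert_\infty$ and $\Vert R^{-1}\Vert_\infty$ but \emph{not} on $t_f$, which is the asserted bounded variance as $t_f\to\infty$.
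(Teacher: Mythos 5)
Your proposal is correct and matches the paper's proof essentially step for step: the same recursion $\nu_{k+1}=\bigl((1-\alpha)\mathcal{I}+\alpha\mathcal{S}\bigr)\nu_k+\alpha\zeta_k$ unrolled from $\nu_0=0$ gives unbiasedness by linearity and zero-mean noise, and the variance bound proceeds exactly as you describe, via independence of the noises across iterations, a Lemma~\ref{lem:cov_inf}-type estimate $\Vert{\rm Cov}(\mathcal{K}\eta;t)\Vert_{\infty}\leq m^{3/2}e^2\Vert\mathcal{K}\Vert_{\infty,t_f}^2$ from the almost-sure bound on $\eta$, and the horizon-independent inequality $\Vert\mathcal{G}\Vert_{\infty,t_f}\leq\Vert G\Vert_{\rm pk}$ from (\ref{Gtf<Gpk}). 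The only cosmetic differences are that the paper works in the $\infty$-norm directly with the unweighted operator $(1-\alpha)\mathcal{I}+\alpha\mathcal{S}$, whose contraction factor comes from (\ref{eqn:infproof1}) rather than from the $R$-weighted $\mathcal{M}_\alpha$ (which has no natural meaning outside the Hilbert-space setting), and it bounds the sum by the full geometric series, so its covariance bound is uniform in $k$ as well as in $t_f$ rather than relying on $k$ being fixed.
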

\begin{proof}
    see appendix~\ref{sec:proof_thm:noise_inf}.
\end{proof}

Now assume that Algorithm~\ref{alg} is run for $k$ iterations in the presence of a measurement noise satisfying Assumption~\ref{ass:noise_IH}. Applying the control input (\ref{eqn:estima}) obtained from Algorithm~\ref{alg} results in
\begin{equation}\label{exeta}
    \tilde{x}_k(t)=x_k(t)+\mathcal{G}_x\nu_k(t)+\eta_x(t), \quad t\in[0,t_f].
\end{equation}
The optimal feedback gain can then be estimated by solving
\begin{equation}\label{eqn:Xtild.Ktild.Utild}
            \left(\tilde{X}_{t_1:t_n}'\otimes I_m\right)\;{\rm vec}(\tilde{K}_{k,t_f})= -{\rm vec}(\tilde{U}_{t_1:t_n}),
\end{equation}
where the matrices
\begin{align}\label{eqn:Xtild,Utild}
\tilde{X}_{t_1:t_n}&=
\begin{bmatrix}
\tilde{x}_k(t_1)& \tilde{x}_k(t_2)&\dots&\tilde{x}_k(t_{n})   
\end{bmatrix},\nonumber\\
\tilde{U}_{t_1:t_n}&=
\begin{bmatrix}
\tilde{u}_k(t_1)& \tilde{u}_k(t_2)&\dots&\tilde{u}_k(t_{n})   
\end{bmatrix}
\end{align}
are set up by the noisy data sampled from the system (\ref{eqn:system}). Since $\mathbb{E}\bigl(\nu_k(t)\bigr)=0$ holds from Theorem~\ref{thm:noise_inf} and $\mathbb{E}\bigl(\mathcal{G}_x\nu_k(t)\bigr)=\mathbb{E}\bigl(\eta_x(t)\bigr)=0$ holds in (\ref{exeta}) for all $t\in[0,\bar{t}]$, the matrices (\ref{eqn:Xtild,Utild}) are unbiased estimates of their deterministic counterparts in (\ref{eqn:X,U_t1:tn}), \emph{i.e.},
$$
\mathbb{E}(\tilde{X}_{t_1:t_n})=X_{t_1:t_n},\;
\mathbb{E}(\tilde{U}_{t_1:t_n})=U_{t_1:t_n}.
$$
Therefore, by averaging the data matrices obatined from $ \overline{\omega}\in\mathbb{N}$ independent trials as
$$
\overline{X}_{t_1:t_n;\overline{\omega}}=\frac{1}{\overline{\omega}}\sum_{\omega=1}^{\overline{\omega}}\tilde{X}_{t_1:t_n;\omega},
\quad
\overline{U}_{t_1:t_n;\overline{\omega}}=\frac{1}{\overline{\omega}}\sum_{\omega=1}^{\overline{\omega}}\tilde{U}_{t_1:t_n;\omega},
$$
where $\tilde{X}_{t_1:t_n;\omega}$ and $\tilde{U}_{t_1:t_n;\omega}$ denote the matrices (\ref{eqn:Xtild,Utild}) obtained in the $\omega$-th trial, one can find the optimal state-feedback gain as $\lim_{k,t_f,\overline{\omega}\to\infty}\overline{K}_{k,t_f,\overline{\omega}}=K_{\infty}$, where
$$
            \left(\overline{X}_{t_1:t_n;\overline{\omega}}'\otimes I_m\right)\;{\rm vec}\left(\overline{K}_{k,t_f,\overline{\omega}}\right)=
            -{\rm vec}\left(\overline{U}_{t_1:t_n;\overline{\omega}}\right).
$$

\section{Numerical experiments}
The proposed approach is evaluated by numerical examples in this section. The first example considers a finite-horizon optimal control problem, while the second example considers an infinite-horizon problem with noisy measurements.
\subsection{Example 1.}\label{ex:tank}
    Consider the multi-tank system in \cite{multitank2020} which is composed of 6 identical interconnected tanks. Desired is filling up the tanks to specified (possibly different) levels $h^{d}\in\mathbb{R}^6$ maintained by the steady-state inflows $v^{d}\in\mathbb{R}^6$. Linearizing this system is known to give an internally completely symmetric system, \emph{i.e.}, $A=A'$ and $B=I_6$. The states of the linearized model are the fluid level errors $x(t)=h(t)-h^{d}$ which are assumed to be measurable for a state-feedback control, \emph{i.e.}, $C=I$ and $D=0$. The matrix $A$ is assumed to be completely unknown, which includes the parameters related to the pipes and tanks geometries, discharge coefficients, etc. An application of Algorithm~\ref{alg} to this problem with
    $$
    Q=I,\, R=I,\, t_f=1
    $$
    provides the optimal control input as shown in Figure~\ref{fig:tank}. In this experiment, the initial control input is chosen as $u_0=0$ and the step size is chosen as $\alpha=1$.

\begin{figure}
     \centering
     
     \begin{subfigure}[b]{0.49\textwidth}
         \centering
        \includegraphics[width=1\linewidth]{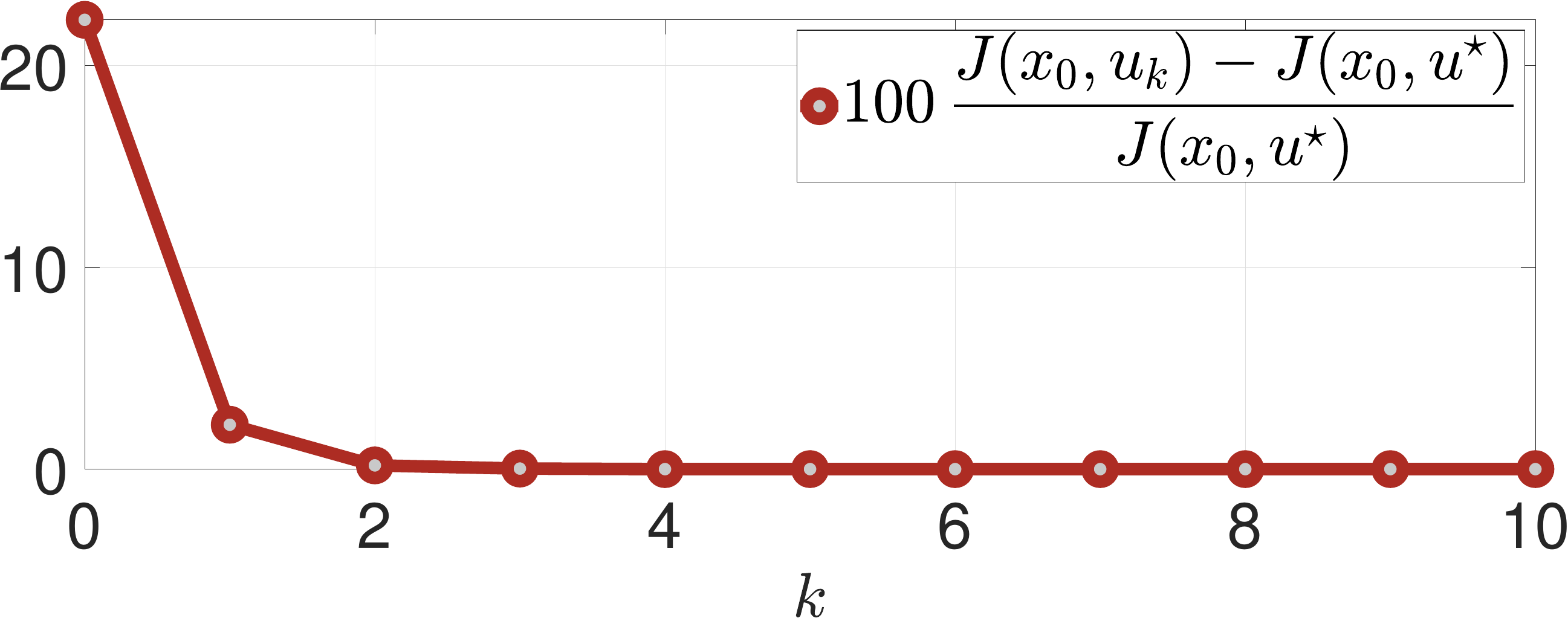}
	    \caption{The relative optimality gap between the optimal solution $u^{\star}$ and the solution $u_k$ obtained from Algorithm~\ref{alg}.}
         \label{fig:tank_J}
     \end{subfigure}
     \hfill
     \begin{subfigure}[b]{0.49\textwidth}
        \centering
        \includegraphics[width=1\linewidth]{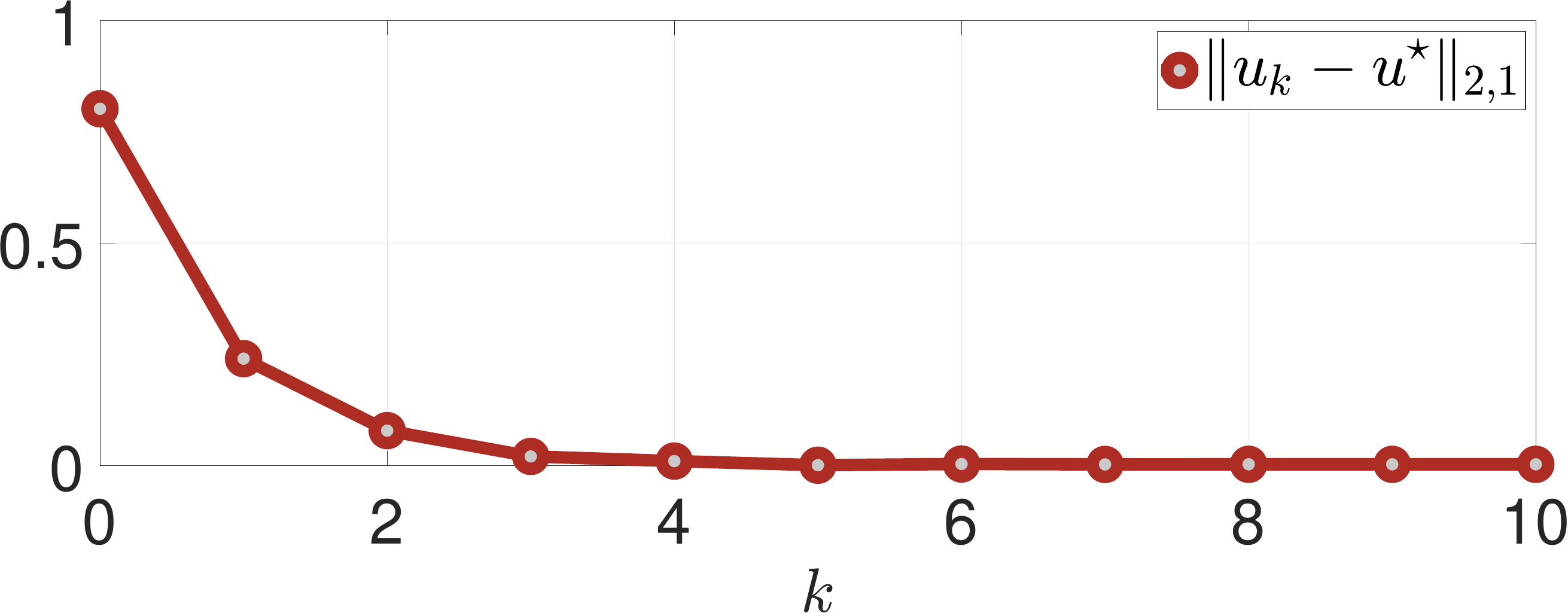}
	 \caption{Comparing the solution $u_k$ obtained from the $k$th iteration in Algorithm~\ref{alg} to the optimal solution $u^{\star}$.}   
         \label{fig:tank_u}
     \end{subfigure}
     
    \caption{Application of Algorithm~\ref{alg} in Example~\ref{ex:tank}.}
    \label{fig:tank}
\end{figure}

\subsection{Example 2}\label{ex:motor}
Consider a rotational electromechanical system described by the state-space equations~\cite[\S2.7]{ohiobook}
\begin{align}\label{eqn:motor}
    \frac{d}{dt}{x}(t)&=\begin{bmatrix}0&1\\-2&-3\end{bmatrix}x(t)+
    \begin{bmatrix}0\\2\end{bmatrix}u(t)\nonumber\\
    y(t)&=\begin{bmatrix}1&0\end{bmatrix}x(t)+\eta(t),
\end{align}
where $y$ denotes the motor shaft angular velocity, $u$ is the applied voltage and $\eta$ is the measurement noise. Assuming all the state-space matrices in (\ref{eqn:motor}) are unknown, desired is the optimal state-feedback controller (\ref{eqn:u=-Kx_IH}) that minimizes the infinite-horizon cost
$$
J(x_0,u)=\frac{1}{2}\int_0^{\infty}y^2(t)+2u^2(t)dt,
$$
where $x_0=\begin{bmatrix}1&1\end{bmatrix}'$. For this purpose, we use Algorithm~\ref{alg} with $k=11$ iterations and the horizon length $t_f=4$. The result is shown in Figures~\ref{fig:infH_u} and \ref{fig:infH_K}, where it is assumed there is no measurement noise ($\eta(t)=\eta_x(t)=0$). Next, we assume both the output and the state measurements are contaminated by zero-mean noises as discussed in Section~\ref{sec:infH}. In this case, using Algorithm~\ref{alg} with the same number of iterations and horizon length ($k=11$, $t_f=4$) yields in the input signal $\tilde{u}_{11}$ (\ref{eqn:estima}), which is used to obtain the noisy data matrices in (\ref{eqn:Xtild,Utild}). These matrices are then used to solve (\ref{eqn:Xtild.Ktild.Utild}) for $\tilde{K}_{11,4}$. We repeat this process for $\overline{\omega}=400$ trials, each realized with new instances of the output and state measurement noises $\eta$ and $\eta_x$ drawn from the same distribution. For comparison purposes, we have solved the same problem with the algorithm provided in \cite{auto2012}. The estimated gains obtained from both approaches in different trials are plotted in Figure~(\ref{fig:noiseballs}). To recover the optimal gain from its noisy estimates, one can find the averaged solution $\overline{K}_{11,4,\overline{\omega}}$ (see Figure~\ref{fig:trials}). However, as it is observed in Figure~(\ref{fig:trials}), unlike Algorithm~\ref{alg}, the bias introduced by the algorithm in \cite{auto2012} does not vanish by averaging the solutions. Further, Algorithm~\ref{alg} took $0.0451 \textnormal{s}$ on average to complete one trial compared with $1.1766 \textnormal{s}$ elapsed in the algorithm provided in \cite{auto2012} in this experiment. 


\begin{figure}
     \centering
     
     \begin{subfigure}[b]{0.49\textwidth}
         \centering
        \includegraphics[width=1\linewidth]{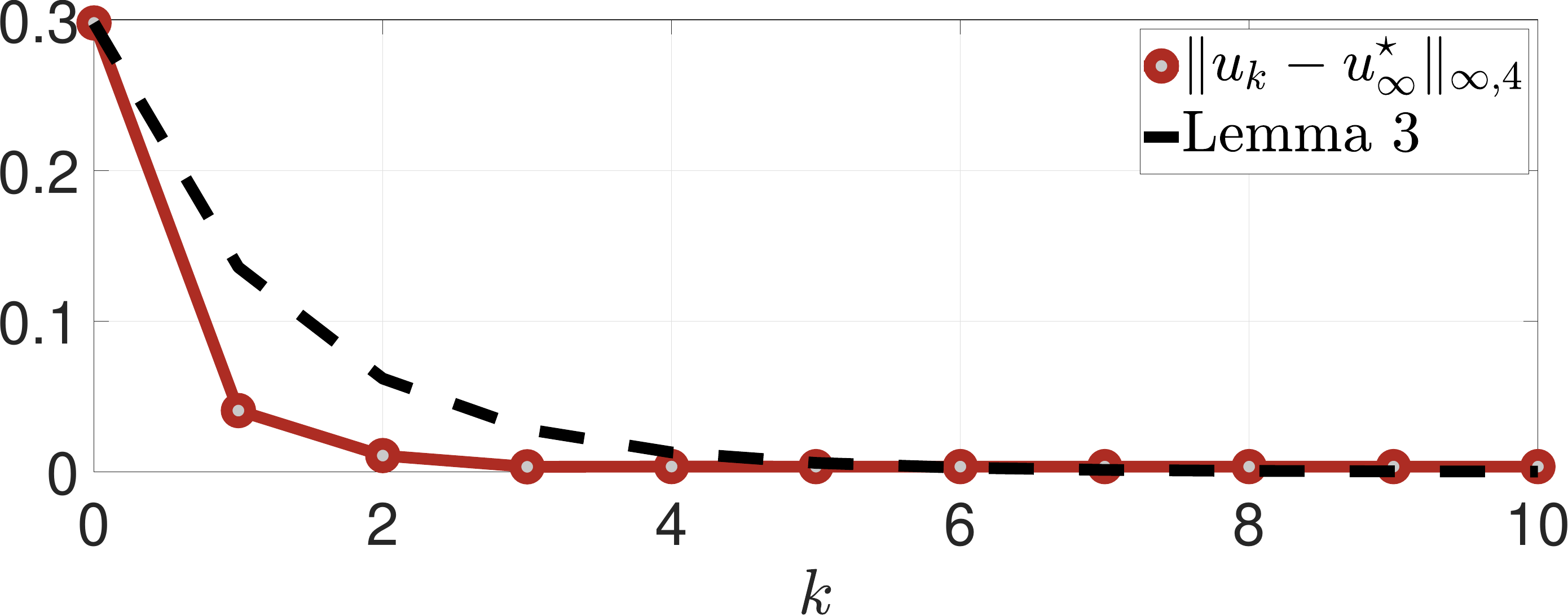}
	    \caption{Comparing the solution $u_k$ obtained from the $k$th iteration in Algorithm~\ref{alg} to the optimal solution $u^{\star}$. The theoretical upper bound guaranteed by Lemma~\ref{lem:convergence_in_Linf} is also shown in this figure.}
         \label{fig:infH_u}
     \end{subfigure}
     \vspace{1mm}
     \hfill
     \begin{subfigure}[b]{0.49\textwidth}
        \centering
        \includegraphics[width=1\linewidth]{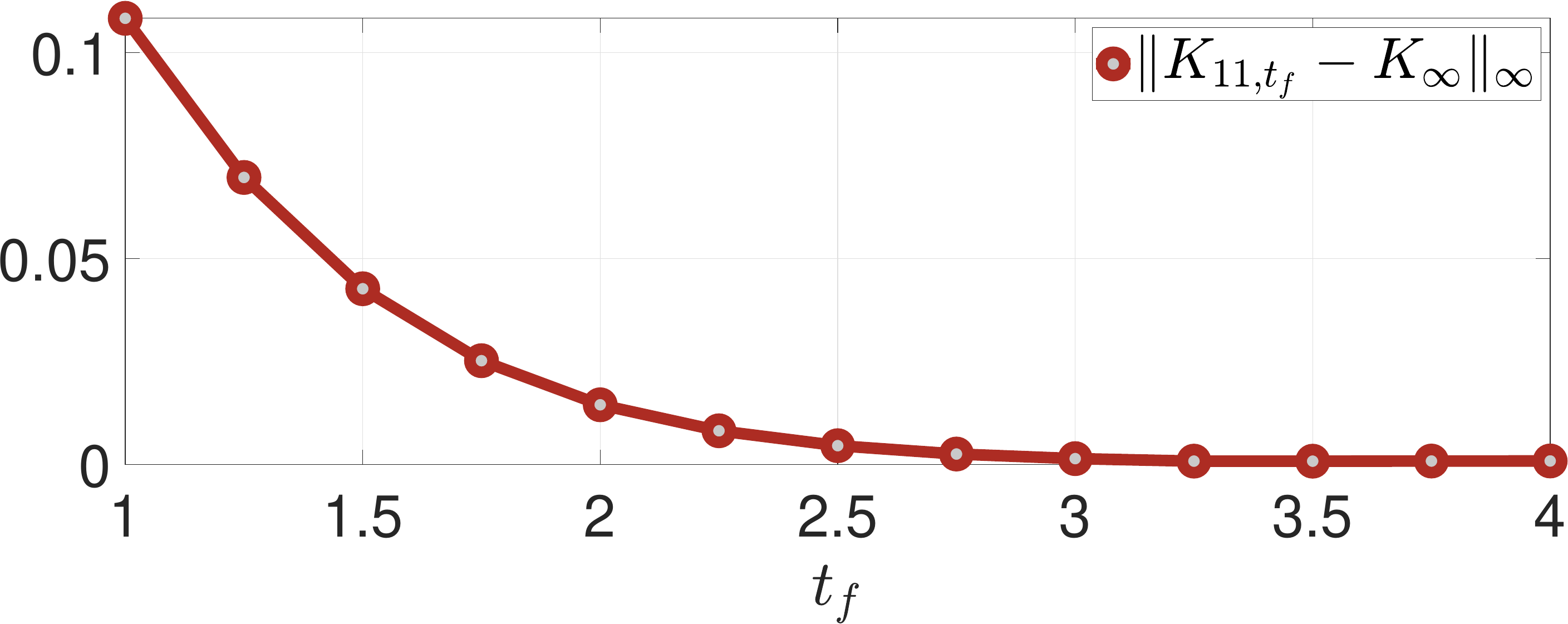}
	    \caption{Comparing the solution $K_{11,t_f}$ of (\ref{eqn:K_k,tf}) to the optimal LQR gain $K_{\infty}$ (\ref{eqn:lqrGain_IH}). The solution obtained from Algorithm~\ref{alg} converges to the optimal solution as $t_f$ is increased.}
         \label{fig:infH_K}
     \end{subfigure}
     \vspace{1mm}
     \vfill
          \begin{subfigure}[b]{0.49\textwidth}
         \centering
        \includegraphics[width=1\linewidth]{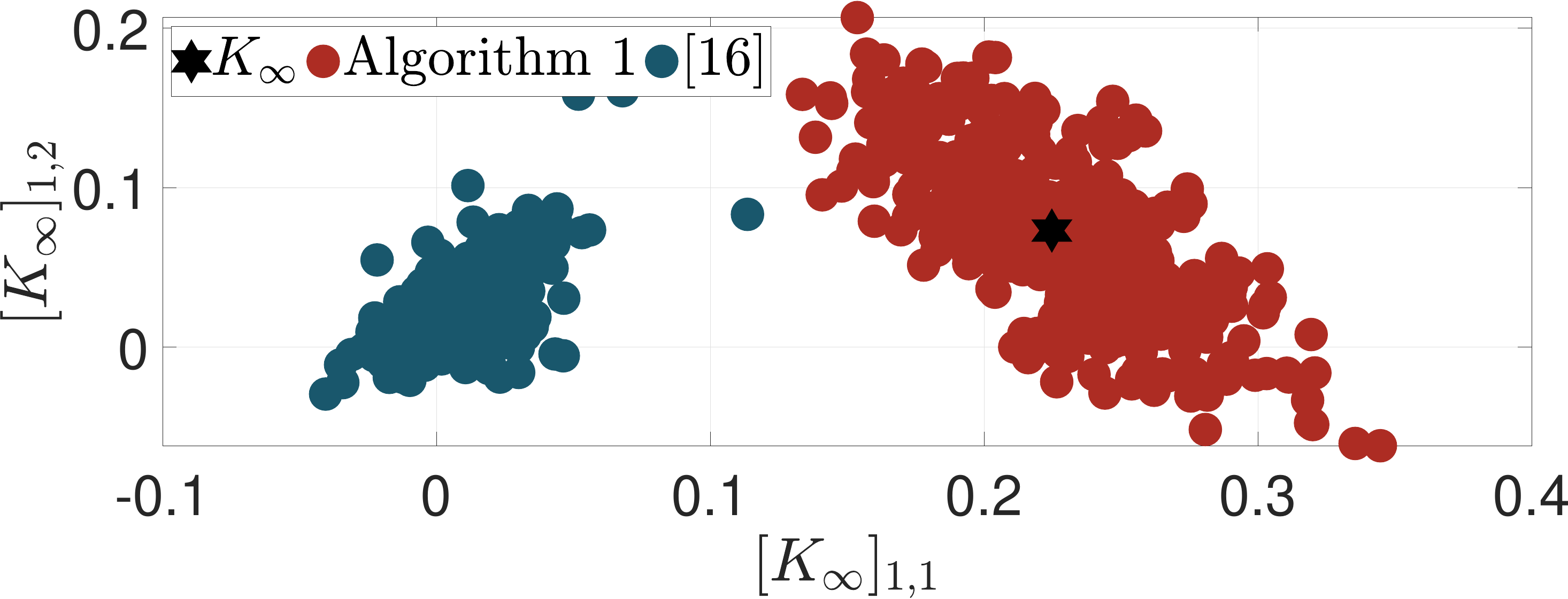}
	    \caption{The optimal LQR gain $K_{\infty}$ estimated by using the algorithms provided in this paper and \cite{auto2012} when there is measurement noise.}
         \label{fig:noiseballs}
     \end{subfigure}
     \hfill
     \begin{subfigure}[b]{0.49\textwidth}
        \centering
        \includegraphics[width=1\linewidth]{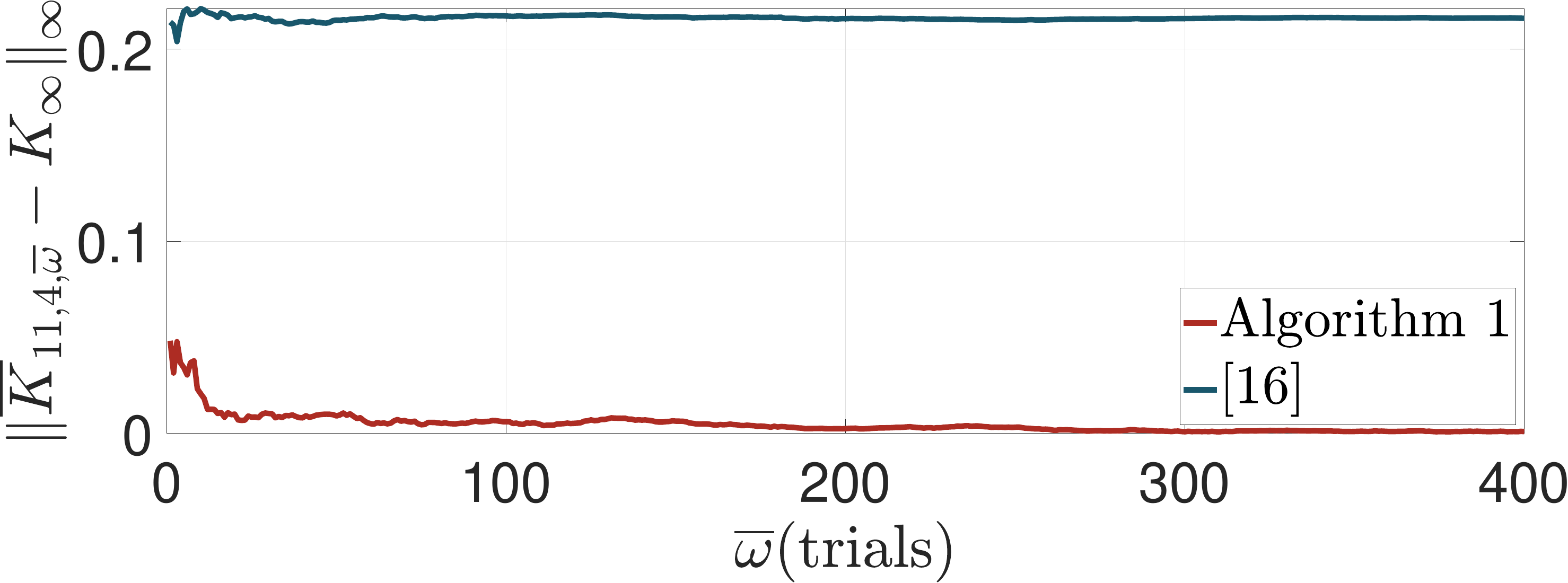}
	    \caption{The averaged solution from Algorithm~\ref{alg} converges to the optimal LQR gain $K_{\infty}$ as more trials are considered.}
         \label{fig:trials}
     \end{subfigure}
     
    \caption{Application of Algorithm~\ref{alg} in Example~\ref{ex:motor}.}
    \label{fig:J,Ktanks}
\end{figure}

\section{Conclusion}
We have shown that the two-point boundary value problems that arise from the application of the minimum principle to linear-quadratic problems can be implemented in a model-free way when the system is externally symmetric. Based on this result, we have provided an algorithm with guaranteed convergence towards the optimal solution of finite-horizon continuous-time output-regulation problems for externally symmetric systems. It was shown that when the system norm is small, the same algorithm can also be used to find the optimal state-feedback gains for infinite-horizon problems. This algorithm operates in a completely model-free fashion and has favorable properties including low computational complexity, simple data sampling strategy, and no estimation bias when there is measurement noise.
\newpage
\appendix
\section{Appendix}

\begin{lemma}\label{lem:cov}
    Let Assumption~\ref{ass:noise_FH} hold. If $\mathcal{K}$ is a bounded linear operator in $\mathcal{L}_2^m(0,t_f)$, then
    $
    \int_0^{t_f}\Vert {\rm Cov}(\mathcal{K}\eta;t)\Vert_2 dt\leq
    m\Vert\mathcal{K}\Vert^2_{2,t_f}
    \sigma^2 t_f.
    $
\end{lemma}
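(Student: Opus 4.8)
The plan is to reduce the integrated spectral norm of the covariance to the expected squared $\mathcal{L}_2$-norm of $\mathcal{K}\eta$, and then to exploit the operator bound on $\mathcal{K}$ together with the pointwise variance bound on $\eta$. Since $\eta$ has zero mean and $\mathcal{K}$ is linear, $\mathcal{K}\eta$ also has zero mean, so the covariance at each time coincides with the second moment, ${\rm Cov}(\mathcal{K}\eta;t)=\mathbb{E}\bigl[(\mathcal{K}\eta)(t)(\mathcal{K}\eta)'(t)\bigr]$. The first key fact I would use is that for any symmetric positive semidefinite $S\in\mathbb{R}^{m\times m}$ one has $\Vert S\Vert_2\leq {\rm tr}(S)\leq m\Vert S\Vert_2$, since the spectral norm is the largest eigenvalue while the trace is the sum of the $m$ non-negative eigenvalues. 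Applying the left inequality to $S={\rm Cov}(\mathcal{K}\eta;t)$ gives $\Vert {\rm Cov}(\mathcal{K}\eta;t)\Vert_2\leq {\rm tr}\bigl({\rm Cov}(\mathcal{K}\eta;t)\bigr)=\mathbb{E}\bigl[\Vert (\mathcal{K}\eta)(t)\Vert_2^2\bigr]$.

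Next I would integrate over $[0,t_f]$ and interchange the time integral with the expectation. Because the integrand $\Vert(\mathcal{K}\eta)(t)\Vert_2^2$ is non-negative, Tonelli's theorem justifies the swap with no integrability side-conditions, yielding $\int_0^{t_f}\Vert {\rm Cov}(\mathcal{K}\eta;t)\Vert_2\,dt\leq \mathbb{E}\bigl[\Vert \mathcal{K}\eta\Vert_{2,t_f}^2\bigr]$. The boundedness of $\mathcal{K}$ then gives $\Vert \mathcal{K}\eta\Vert_{2,t_f}^2\leq \Vert \mathcal{K}\Vert_{2,t_f}^2\Vert \eta\Vert_{2,t_f}^2$ almost surely, so taking expectations produces $\mathbb{E}\bigl[\Vert \mathcal{K}\eta\Vert_{2,t_f}^2\bigr]\leq \Vert \mathcal{K}\Vert_{2,t_f}^2\,\mathbb{E}\bigl[\Vert \eta\Vert_{2,t_f}^2\bigr]$. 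The square-integrability clause $\mathbb{P}\bigl(\eta\in\mathcal{L}_2^m(0,t_f)\bigr)=1$ of Assumption~\ref{ass:noise_FH} guarantees these quantities are well defined almost surely.

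Finally, I would bound $\mathbb{E}\bigl[\Vert \eta\Vert_{2,t_f}^2\bigr]$. Swapping expectation and time integral once more by Tonelli and using the zero-mean property gives $\mathbb{E}\bigl[\Vert \eta\Vert_{2,t_f}^2\bigr]=\int_0^{t_f}{\rm tr}\bigl({\rm Cov}(\eta;t)\bigr)\,dt$; the right-hand inequality ${\rm tr}(S)\leq m\Vert S\Vert_2$ together with the pointwise bound $\Vert {\rm Cov}(\eta;t)\Vert_2\leq \sigma^2$ from Assumption~\ref{ass:noise_FH} then yields $\int_0^{t_f}{\rm tr}\bigl({\rm Cov}(\eta;t)\bigr)\,dt\leq m\sigma^2 t_f$. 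Chaining the three estimates delivers the claimed bound $m\Vert \mathcal{K}\Vert_{2,t_f}^2\sigma^2 t_f$. I expect no genuine obstacle here; the only point requiring care is the double appeal to Tonelli's theorem to move the expectation inside the time integral, which is legitimate \emph{precisely} because every integrand is non-negative, so no dominating integrable majorant must be exhibited. Note also that only the zero-mean and bounded-variance parts of the assumption enter, not the independence-from-history property.
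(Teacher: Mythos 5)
Your proof is correct and follows essentially the same chain of estimates as the paper's own: bound the spectral norm of the covariance by its trace, pass to $\mathbb{E}\bigl(\Vert\mathcal{K}\eta\Vert_{2,t_f}^2\bigr)$, apply the operator-norm bound, and close with ${\rm tr}(S)\leq m\Vert S\Vert_2$ and the pointwise variance bound $\Vert{\rm Cov}(\eta;t)\Vert_2\leq\sigma^2$. Your explicit appeal to Tonelli for the two expectation--integral interchanges, and your observation that only the zero-mean and bounded-variance clauses of Assumption~\ref{ass:noise_FH} are used (not independence from the history), are accurate refinements of steps the paper leaves implicit.
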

\begin{proof}
    Since ${\rm Cov}(\mathcal{K}\eta;t)\succeq 0$, one can write
    \begin{align*}
        \int_0^{t_f} \Vert{\rm Cov}(\mathcal{K}\eta;t)\Vert_2 dt &\leq
        \int_0^{t_f}{\rm tr}({\rm Cov}\bigl(\mathcal{K}\eta;t) \bigr) dt \\
        &=\int_0^{t_f}\mathbb{E}
        \Bigl({\rm tr}
        \bigl(\mathcal{K}\eta(t){\mathcal{K}\eta}'(t)\bigr)\Bigr)dt \\
        &=\int_0^{t_f}\mathbb{E}(\Vert\mathcal{K}\eta(t)\Vert_{2}^2)dt\\
        &=\mathbb{E}(\Vert\mathcal{K}\eta\Vert_{2,t_f}^2)\\
        &\leq \Vert\mathcal{K}\Vert_{2,t_f}^2
        \mathbb{E}(\Vert \eta\Vert_{2,t_f}^2)\\
        &=\Vert\mathcal{K}\Vert_{2,t_f}^2
        \mathbb{E}\left(\int_0^{t_f}{\rm tr}\bigl(\eta(t)\eta'(t)\bigr) dt\right)\\
        &=\Vert\mathcal{K}\Vert_{2,t_f}^2
        \int_0^{t_f}{\rm tr}\bigl( {\rm Cov}(\eta;t)\bigr)dt\\
        &\leq m\Vert\mathcal{K}\Vert^2_{2,t_f}
    \int_0^{t_f}\Vert {\rm Cov}(\eta;t)\Vert_2 dt \\
    &\leq
    m\Vert\mathcal{K}\Vert^2_{2,t_f}
    \sigma^2 t_f.
    \end{align*}
\end{proof}
\begin{lemma}\label{lem:cov_inf}
    Let Assumption~\ref{ass:noise_IH} hold. If $\mathcal{K}$ is a bounded linear operator in $\mathcal{L}_{\infty}^m(0,t_f)$, then
    $
    \Vert{\rm Cov}(\mathcal{K}\eta;t)\Vert_{\infty} \leq m^{3/2} e^2\Vert \mathcal{K}\Vert_{\infty,t_f}^2
    $ for all $t\in[0,t_f]$.
\end{lemma}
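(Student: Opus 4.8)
The plan is to mirror the structure of the $\mathcal{L}_2$ estimate in Lemma~\ref{lem:cov}, but to route the bound through the $\infty$-norm together with the standard equivalences between the $2$- and $\infty$-norms of an $m\times m$ matrix. First I would observe that, since $\mathcal{K}$ is linear and $\eta$ is zero-mean by Assumption~\ref{ass:noise_IH}, the process $\mathcal{K}\eta$ is also zero-mean, so that ${\rm Cov}(\mathcal{K}\eta;t)=\mathbb{E}\bigl((\mathcal{K}\eta)(t)(\mathcal{K}\eta)'(t)\bigr)$ is symmetric and positive semidefinite for each fixed $t\in[0,t_f]$.

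Next I would exploit two elementary matrix-norm facts: for any symmetric $M\in\mathbb{R}^{m\times m}$ one has $\Vert M\Vert_\infty\leq\sqrt{m}\,\Vert M\Vert_2$, and for $M\succeq 0$ one has $\Vert M\Vert_2=\lambda_{\max}(M)\leq{\rm tr}(M)$. Applying these to the covariance yields $\Vert{\rm Cov}(\mathcal{K}\eta;t)\Vert_\infty\leq\sqrt{m}\,{\rm tr}\bigl({\rm Cov}(\mathcal{K}\eta;t)\bigr)=\sqrt{m}\,\mathbb{E}\bigl(\Vert(\mathcal{K}\eta)(t)\Vert_2^2\bigr)$, which reduces the task to controlling the second moment of the pointwise $2$-norm of $\mathcal{K}\eta$.

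The remaining step is a pointwise almost-sure bound. I would convert the $2$-norm to the $\infty$-norm via $\Vert(\mathcal{K}\eta)(t)\Vert_2\leq\sqrt{m}\,\Vert(\mathcal{K}\eta)(t)\Vert_\infty\leq\sqrt{m}\,\Vert\mathcal{K}\eta\Vert_{\infty,t_f}$, then invoke boundedness of $\mathcal{K}$ to get $\Vert\mathcal{K}\eta\Vert_{\infty,t_f}\leq\Vert\mathcal{K}\Vert_{\infty,t_f}\Vert\eta\Vert_{\infty,t_f}$, and finally use Assumption~\ref{ass:noise_IH} to bound $\Vert\eta\Vert_{\infty,t_f}\leq e$ almost surely. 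Chaining these and taking expectations gives $\mathbb{E}\bigl(\Vert(\mathcal{K}\eta)(t)\Vert_2^2\bigr)\leq m\,e^2\Vert\mathcal{K}\Vert_{\infty,t_f}^2$, and substituting back into the previous display produces exactly $\Vert{\rm Cov}(\mathcal{K}\eta;t)\Vert_\infty\leq m^{3/2}e^2\Vert\mathcal{K}\Vert_{\infty,t_f}^2$ for all $t\in[0,t_f]$.

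The one point deserving care — the main obstacle, such as it is — is the passage from the per-time bound $\mathbb{P}(\Vert\eta(t)\Vert_\infty\leq e)=1$, stated separately for each fixed $t$, to the essential-supremum bound $\Vert\eta\Vert_{\infty,t_f}\leq e$ holding almost surely; note that $(\mathcal{K}\eta)(t)$ depends on $\eta$ over the whole interval, so the operator-norm estimate genuinely requires the latter. This is justified by a Tonelli argument: the indicator of $\lbrace(t,\omega):\Vert\eta(t,\omega)\Vert_\infty>e\rbrace$ integrates to zero in $\omega$ for every $t$, hence integrates to zero on the product space, so for almost every $\omega$ the bound holds for almost every $t$, giving $\Vert\eta\Vert_{\infty,t_f}\leq e$ almost surely. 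Everything else is a routine chaining of the norm inequalities above.
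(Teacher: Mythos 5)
Your proof is correct and follows essentially the same chain of inequalities as the paper's: $\Vert\cdot\Vert_{\infty}\leq\sqrt{m}\,\Vert\cdot\Vert_{2}\leq\sqrt{m}\,{\rm tr}(\cdot)$ on the covariance, conversion of the trace to $\mathbb{E}\bigl(\Vert\mathcal{K}\eta(t)\Vert_2^2\bigr)$, a further factor $\sqrt{m}$ from the $2$-to-$\infty$ norm switch, and finally the operator bound together with $\Vert\eta\Vert_{\infty,t_f}\leq e$ almost surely. Your explicit Tonelli argument upgrading the per-time bound $\mathbb{P}\bigl(\Vert\eta(t)\Vert_{\infty}\leq e\bigr)=1$ to the pathwise essential-supremum bound is a technicality the paper passes over silently, and it is a welcome (and correct) addition rather than a deviation.
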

\begin{proof}
    \begin{align*}
        \Vert{\rm Cov}(\mathcal{K}\eta;t)\Vert_{\infty} &\leq m^{1/2} \Vert{\rm Cov}(\mathcal{K}\eta;t)\Vert_{2}\\
        &\leq m^{1/2} {\rm tr}\bigl({\rm Cov}(\mathcal{K}\eta;t)\bigr)\\
        &=m^{1/2} \mathbb{E}\Bigl({\rm tr}\bigl(\mathcal{K}\eta(t){\mathcal{K}\eta}'(t)\bigr)\Bigr)\\
        &=m^{1/2} \mathbb{E}\left(\Vert\mathcal{K}\eta(t)\Vert^2_2\right)\\
        &\leq m^{3/2} \mathbb{E}\left(\Vert\mathcal{K}\eta(t)\Vert^2_{\infty}\right)\\
        &\leq m^{3/2} \mathbb{E}\left(\Vert\mathcal{K}\eta\Vert^2_{\infty,t_f}\right)\\
        &\leq m^{3/2} \Vert \mathcal{K}\Vert_{\infty,t_f}^2 \mathbb{E}\left(\Vert \eta\Vert^2_{\infty,t_f}\right)\\
        &\leq m^{3/2} e^2 \Vert \mathcal{K}\Vert_{\infty,t_f}^2.
    \end{align*}
\end{proof}

\subsection{Proof of Theorem~\ref{thm:noiseFH}} \label{sec:proof_thm:noiseFH}

    In the presence of measurement noise, the update step of Algorithm~\ref{alg} is written as
    \begin{align}\label{eqn:update_utild}
    \tilde{u}_{k+1}&=(1-\alpha)\tilde{u}_k+\alpha\tilde{\mathcal{T}}_k(\tilde{u}_k)\nonumber\\
    &=(1-\alpha)\tilde{u}_k+\alpha\bigl(\mathcal{T}(\tilde{u}_k)+\zeta_k\bigr)\nonumber\\
    &=(1-\alpha)\tilde{u}_k+\alpha\bigl(\mathcal{T}(u_k)+\mathcal{S}(v_k)+\zeta_k\bigr),
    \end{align}
    where $\tilde{u}_k$ is the input signal in the $k$th iteration. It is deduced from (\ref{eqn:update_utild}) that the propagated noise $v_k$ in (\ref{eqn:estima}) satisfies the recurrence    
    \begin{align}
        \nu_{k+1}&=(1-\alpha)u_k+\alpha\mathcal{T}(u_k)-u_{k+1}+(1-\alpha)\nu_{k}\nonumber\\
        &+\alpha\mathcal{S}(\nu_k)+\alpha\zeta_k \nonumber\\
        &=\bigl((1-\alpha)\mathcal{I}+\alpha\mathcal{S}\bigr)\nu_k+\alpha\zeta_k\label{PRE(1-a)I+aS+azeta}\\
        &=R^{-1/2} \mathcal{M}_\alpha R^{1/2}\nu_k+\alpha\zeta_k,\label{(1-a)I+aS+azeta}
    \end{align}
    where $\mathcal{M}_\alpha$ is given by (\ref{eqn:M}). As $\nu_0=0$, it follows from (\ref{(1-a)I+aS+azeta}) and (\ref{zeta}) that
    \begin{align}\label{usingintproof}
        \nu_k&=\alpha\sum_{i=0}^{k-1} R^{-1/2}\mathcal{M}_{\alpha}^i R^{1/2} \zeta_{k-1-i} \nonumber\\
        &=\sum_{i=0}^{k-1} \mathcal{K}_{1,i}\eta_{1,k-1-i}+ \mathcal{K}_{2,i}\eta_{2,k-1-i}, \quad k\in\mathbb{N}
    \end{align}
    where
    \begin{align}\label{K1K2}
        \mathcal{K}_{1,i}&=-\alpha R^{-1/2}\mathcal{M}_{\alpha}^i R^{-1/2}\Sigma_e \mathcal{J}\mathcal{G}\Sigma_e Q \mathcal{J}, \nonumber\\
        \mathcal{K}_{2,i}&=-\alpha R^{-1/2}\mathcal{M}_{\alpha}^i R^{-1/2}\Sigma_e \mathcal{J}
    \end{align}
    are bounded linear operators in $\mathcal{L}^m_2(0,t_f)$.
    Therefore,
    $$
    \mathbb{E}( \nu_k)=
    \sum_{i=0}^{k-1} \mathcal{K}_{1,i}\mathbb{E}(\eta_{1,k-1-i})+ \mathcal{K}_{2,i}\mathbb{E}(\eta_{2,k-1-i})=0.
    $$
    Next we show that the covariance ${\rm Cov}(\nu_k;t)=\mathbb{E}\bigl(\nu_k(t)\nu_k'(t)\bigr)$ is bounded. Since the measurement noises
    $$
    \eta_{j,i}, \quad j=1,2,\,i=0,1,\dots,k-1
    $$
    are zero-mean pairwise-independent random variables, one can write from (\ref{usingintproof}) that
    \begin{align}\label{corvet}
        &{\rm Cov}(\nu_k;t)=\\
        &\sum_{i=0}^{k-1}
        \bigl(
        {\rm Cov}\left(\mathcal{K}_{1,i}\eta_{1,k-1-i};t\right)+ {\rm Cov}\left(\mathcal{K}_{2,i}\eta_{2,k-1-i};t\right)
        \bigr).\nonumber
    \end{align}
    By using Lemma~\ref{lem:cov} and noting the fact that $\Vert \mathcal{M}_{\alpha}\Vert_{2,t_f}<1$ holds from Theorem~\ref{thm}, it is deduced from (\ref{corvet}) that
    \begin{align*}
        &\int_0^{t_f}\Vert{\rm Cov}(\nu_k;t)\Vert_2 dt\\
        &\leq m \sigma^2 t_f \sum_{i=0}^{k-1} \left(
        \Vert \mathcal{K}_{1,i}\Vert^2_{2,t_f} +
        \Vert \mathcal{K}_{2,i}\Vert^2_{2,t_f}\right)\\
        &\leq m \sigma^2 \alpha^2 t_f\Vert R^{-1}\Vert_2^2
        \left( \Vert\mathcal{G}\Vert^2_{2,t_f}\Vert Q \Vert^2_2+1 \right)
        \bigl(1-\Vert\mathcal{M}_\alpha\Vert_{2,t_f}^{2}\bigr)^{-1}\\
        &<\infty
    \end{align*}
    holds for all $k\in\mathbb{N}$.

\hfill$\square$

\subsection{Proof of Theorem~\ref{thm:noiseini}} \label{sec:proof_thm:noiseini}
By following a similar procedure as in Proof of Theorem~\ref{thm:noiseFH}, we obtain the following recurrence relation for the propagated noise $\nu$
    \begin{align}
        \nu_{k+1}&=R^{-1/2} \mathcal{M}_\alpha R^{1/2}\nu_k+\alpha r_{\rho_k},\label{(1-a)I+aS+azeta_ini}
    \end{align}
    where $\mathcal{M}_\alpha$ is given by (\ref{eqn:M}). Since the algorithm is initiated deterministically, we have $\nu_0=0$. Thus, it follows from (\ref{(1-a)I+aS+azeta_ini}) that
    \begin{align}\label{usingintproof_ini}
        \nu_k&=\alpha\sum_{i=0}^{k-1} R^{-1/2}\mathcal{M}_{\alpha}^i R^{1/2} r_{\rho_{k-1-i}}\nonumber\\
        &=\sum_{i=0}^{k-1} \mathcal{K}_{3,i} d_{\rho_{k-1-i}}
        , \quad k\in\mathbb{N}
    \end{align}
    where
    $$
    \mathcal{K}_{3,i}=-\alpha R^{-1/2}\mathcal{M}_{\alpha}^iR^{-1/2}\Sigma_e\mathcal{J}\mathcal{G}\Sigma_eQ\mathcal{J}
    $$
is a bounded linear operators in $\mathcal{L}^m_2(0,t_f)$. Hence, we obtain
$$
       \mathbb{E}(  \nu_k)=\sum_{i=0}^{k-1} \mathbb{E}\bigl(\mathcal{K}_{3,i} d_{\rho_{k-1-i}}\bigr)
       =\sum_{i=0}^{k-1} \mathcal{K}_{3,i} \mathbb{E}\bigl(d_{\rho_{k-1-i}}\bigr),
$$
where $\mathbb{E}\bigl(d_{\rho_{k-1-i}}(t)\bigr)=C\exp(At)\mathbb{E}(\rho_{k-1-i})=0$, which proves (\ref{E=0FHini}). Next, we show that the covariance ${\rm Cov}(\nu_k;t)=\mathbb{E}\bigl(\nu_k(t)\nu_k'(t)\bigr)$ is bounded. Since $\rho_{k-1-i}$ are zero-mean and independent, it is deduced from (\ref{usingintproof_ini}) that
    \begin{align}\label{corvetini}
        {\rm Cov}(\nu_k;t)=
        \sum_{i=0}^{k-1}
        {\rm Cov}\left(
        \mathcal{K}_{3,i} d_{\rho_{k-1-i}};t\right).
    \end{align}
The natural response of the system $d_{\rho_{k-1-i}}$ is bounded within the range $t\in[0,t_f]$. Therefore, we have
\begin{align*}
    \bigl\Vert {\rm Cov}(d_{\rho_k};t)\bigr\Vert_2&=\Vert \mathbb{E}\bigl( d_{\rho_k}(t)d'_{\rho_k}(t)\bigr)\Vert_2\\
    &\leq \textnormal{tr}\bigl(\mathbb{E}( d_{\rho_k}(t)d'_{\rho_k}(t))\bigr)\\
    &\leq \mathbb{E}\bigl(\textnormal{tr}( d_{\rho_k}(t)d'_{\rho_k}(t))\bigr)\\
    &= \mathbb{E}\bigl( \Vert d_{\rho_k}(t)\Vert_2^2 )\bigr)\\
    &\leq \sigma^2<+\infty, \quad t\in[0,t_f].
\end{align*}
Hence, Assumption~\ref{ass:noise_FH} is satisfied for the stochastic process $d_{\rho_k}$ for all $k$. Therefore, one may invoke Lemma~\ref{lem:cov} in (\ref{corvetini}), to obtain
    \begin{align*}
        \int_0^{t_f}\Vert{\rm Cov}(\nu_k;t)\Vert_2dt&\leq
        \sum_{i=0}^{k-1}
        \int_0^{t_f}\bigl\Vert
        {\rm Cov}\left(
        \mathcal{K}_{3,i} d_{\rho_{k-1-i}};t\right)\bigr\Vert_2 dt\nonumber\\
        &\leq \sum_{i=0}^{k-1}
        m\sigma^2 t_f \Vert \mathcal{K}_{3,i}\Vert^2_{2,t_f}\nonumber\\
        &\leq
        m\sigma^2 t_f
\alpha^2
\Vert R^{-1/2}\Vert^4_{2}
\Vert \mathcal{G}\Vert^2_{2,t_f}
\Vert Q \Vert^2_{2}\\
&\times
\sum_{i=0}^{k-1}
\Vert \mathcal{M}_{\alpha}^i\Vert^2_{2,t_f}\\
&\leq
\frac{m\sigma^2 t_f
\alpha^2
\Vert R^{-1/2}\Vert^4_{2}
\Vert \mathcal{G}\Vert^2_{2,t_f}
\Vert Q \Vert^2_{2}}{1-\Vert \mathcal{M}_{\alpha}\Vert^2_{2,t_f}}\\
&\leq \infty.
    \end{align*}

\hfill$\square$

\subsection{Proof of Theorem~\ref{thm:noise_inf}} \label{sec:proof_thm:noise_inf}
The proof of identity (\ref{eqn:E(nu)=0_inf}) is similar to the proof of (\ref{E=0FH}) in Theorem~\ref{thm:noiseFH}. To prove the noise $\nu_k$ has a bounded variance, we plug-in (\ref{eqn:M}) in the definitions (\ref{K1K2}) and use the triangular inequality in (\ref{corvet}) to obtain
\begin{align*}
    &\Vert{\rm Cov}(\nu_{k};t)\Vert_{\infty}\leq \\
    &\sum_{i=0}^{k-1}
    \left\Vert{\rm Cov}\left(\alpha\bigl((1-\alpha)\mathcal{I}+\alpha\mathcal{S}\bigr)^{i}R^{-1}\Sigma_e \mathcal{J}\eta_{2,k-1-i};t\right)\right\Vert_{\infty}+\\
    &\bigl\Vert{\rm Cov}\bigl(\alpha\bigl((1-\alpha)\mathcal{I}+\alpha\mathcal{S}\bigr)^{i}R^{-1}\Sigma_e \mathcal{J}\mathcal{G}\Sigma_e Q \mathcal{J} \eta_{1,k-1-i};t\bigr)\bigr\Vert_{\infty},\\
\end{align*}
where using Lemma~\ref{lem:cov_inf} gives
\begin{align}\label{eqn:last}
    &\Vert{\rm Cov}(\nu_{k};t)\Vert_{\infty}\leq \nonumber\\
    &m^{3/2} e^2\sum_{i=0}^{k-1}
    \left(\left\Vert
    \alpha\bigl((1-\alpha)\mathcal{I}+\alpha\mathcal{S}\bigr)^{i}R^{-1}\Sigma_e \mathcal{J}\mathcal{G}\Sigma_e Q \mathcal{J} \right\Vert_{\infty,t_f}^2
    \right.\nonumber\\
    &+\left.\left\Vert
    \alpha\bigl((1-\alpha)\mathcal{I}+\alpha\mathcal{S}\bigr)^{i}R^{-1}\Sigma_e \mathcal{J}\right\Vert_{\infty,t_f}^2\right)\leq\nonumber\\
    & m^{3/2} e^2 \alpha^2
    \Vert R^{-1}\Vert_{\infty}^2\left( \Vert \mathcal{G}\Vert_{\infty,t_f}^2 \Vert Q\Vert_{\infty}^2+1\right)\times\nonumber\\
    &\sum_{i=0}^{k-1}
    \Vert (1-\alpha)\mathcal{I}+\alpha\mathcal{S}\Vert_{\infty,t_f}^{2i}\leq
    \nonumber\\
    & m^{3/2} e^2 \alpha^2 \Vert R^{-1}\Vert_{\infty}^2\left( \Vert \mathcal{G}\Vert_{\infty,t_f}^2 \Vert Q\Vert_{\infty}^2+1\right)\times\nonumber\\
    &\sum_{i=0}^{k-1}
    \left(1-\alpha+\alpha \Vert R^{-1} \Vert_{\infty} \Vert {G}\Vert_{\rm pk}^2 \Vert Q\Vert_{\infty}\right)^{2i},
\end{align}
in which the last inequality follows from (\ref{eqn:infproof1}). Condition (\ref{eqn:conv_cond_inf}) ensures that the series in (\ref{eqn:last}) is convergent. Therefore, we obtain
\begin{align*}\label{eqn:lastlast}
    \Vert{\rm Cov}(\nu_{k};t)\Vert_{\infty}&\leq
    \frac{
    m^{3/2} e^2 \alpha^2 \Vert R^{-1}\Vert_{\infty}^2\left( \Vert G\Vert_{\rm pk}^2 \Vert Q\Vert_{\infty}^2+1\right)
    }{
    1-\left(1-\alpha+\alpha \Vert R^{-1} \Vert_{\infty} \Vert G\Vert_{\rm pk}^2 \Vert Q\Vert_{\infty}\right)^{2}}
\end{align*}
using (\ref{Gtf<Gpk}). The above variance bound is independent of the horizon length and hence, it is bounded, as $t_f\to\infty$.

\hfill$\square$

\printbibliography

@article{wil1976,
  title={Realization of systems with internal passivity and symmetry constraints},
  author={Willems, Jan C},
  journal={Journal of the Franklin Institute},
  volume={301},
  number={6},
  pages={605--621},
  year={1976},
  publisher={Elsevier}
}

@book{fro1910,
  title={{\"U}ber die mit einer Matrix vertauschbaren Matrizen},
  author={Frobenius, Georg},
  year={1910},
  publisher={Reimer}
}

@article{Uh2013,
  title={Computing matrix symmetrizers, finally possible via the Huang and Nong algorithm},
  author={Uhlig, Frank},
  journal={Linear and Multilinear Algebra},
  volume={61},
  number={7},
  pages={954--969},
  year={2013},
  publisher={Taylor \& Francis}
}

@article{taussky1959,
  title={On the similarity transformation between a matirx and its transpose.},
  author={Taussky, Olga and Zassenhaus, Hans},
  year={1959}
}

@article{lam2019,
  title={On positive realness, negative imaginariness, and H{$\infty$} control of state-space symmetric systems},
  author={Liu, Mei and Lam, James and Zhu, Bohao and Kwok, Ka-Wai},
  journal={Automatica},
  volume={101},
  pages={190--196},
  year={2019},
  publisher={Elsevier}
}

@inproceedings{multitank2020,
  title={Data-driven optimal structured control for unknown symmetric systems},
  author={Massenio, Paolo R and Rizzello, Gianluca and Naso, David and Lewis, Frank L and Davoudi, Ali},
  booktitle={2020 IEEE 16th International Conference on Automation Science and Engineering (CASE)},
  pages={179--184},
  year={2020},
  organization={IEEE}
}

@article{vrabie2009,
  title={Adaptive optimal control for continuous-time linear systems based on policy iteration},
  author={Vrabie, Draguna and Pastravanu, O and Abu-Khalaf, Murad and Lewis, Frank L},
  journal={Automatica},
  volume={45},
  number={2},
  pages={477--484},
  year={2009},
  publisher={Elsevier}
}

@article{lee2012,
  title={Integral Q-learning and explorized policy iteration for adaptive optimal control of continuous-time linear systems},
  author={Lee, Jae Young and Park, Jin Bae and Choi, Yoon Ho},
  journal={Automatica},
  volume={48},
  number={11},
  pages={2850--2859},
  year={2012},
  publisher={Elsevier}
}

@article{auto2012,
  title={Computational adaptive optimal control for continuous-time linear systems with completely unknown dynamics},
  author={Jiang, Yu and Jiang, Zhong-Ping},
  journal={Automatica},
  volume={48},
  number={10},
  pages={2699--2704},
  year={2012},
  publisher={Elsevier}
}

@article{auto2022,
  title={Optimal output regulation for unknown continuous-time linear systems by internal model and adaptive dynamic programming},
  author={Xie, Kedi and Yu, Xiao and Lan, Weiyao},
  journal={Automatica},
  volume={146},
  pages={110564},
  year={2022},
  publisher={Elsevier}
}

@article{dorfler2023,
  title={On the certainty-equivalence approach to direct data-driven LQR design},
  author={D{\"o}rfler, Florian and Tesi, Pietro and De Persis, Claudio},
  journal={IEEE Transactions on Automatic Control},
  year={2023},
  publisher={IEEE}
}

@article{kleinman_infH,
  title={On an iterative technique for Riccati equation computations},
  author={Kleinman, David},
  journal={IEEE Transactions on Automatic Control},
  volume={13},
  number={1},
  pages={114--115},
  year={1968},
  publisher={IEEE}
}

@article{auto2016,
  title={Value iteration and adaptive dynamic programming for data-driven adaptive optimal control design},
  author={Bian, Tao and Jiang, Zhong-Ping},
  journal={Automatica},
  volume={71},
  pages={348--360},
  year={2016},
  publisher={Elsevier}
}

@article{TAC2019,
  title={Initial excitation-based iterative algorithm for approximate optimal control of completely unknown LTI systems},
  author={Jha, Sumit Kumar and Roy, Sayan Basu and Bhasin, Shubhendu},
  journal={IEEE Transactions on Automatic Control},
  volume={64},
  number={12},
  pages={5230--5237},
  year={2019},
  publisher={IEEE}
}

@article{trancyber2022,
  title={Adaptive dynamic programming for model-free global stabilization of control constrained continuous-time systems},
  author={Rizvi, Syed Ali Asad and Lin, Zongli},
  journal={IEEE Transactions on Cybernetics},
  volume={52},
  number={2},
  pages={1048--1060},
  year={2020},
  publisher={IEEE}
}

@article{TAC2016,
  title={Adaptive dynamic programming and adaptive optimal output regulation of linear systems},
  author={Gao, Weinan and Jiang, Zhong-Ping},
  journal={IEEE Transactions on Automatic Control},
  volume={61},
  number={12},
  pages={4164--4169},
  year={2016},
  publisher={IEEE}
}

@phdthesis{kleinman_FH,
  title={Suboptimal design of linear regulator systems subject to computer storage limitations},
  author={Kleinman, David L},
  year={1967},
  school={Massachusetts Institute of Technology}
}

@article{IET,
  title={Finite horizon optimal tracking control of partially unknown linear continuous-time systems using policy iteration},
  author={Li, Chao and Liu, Derong and Li, Hongliang},
  journal={IET Control Theory \& Applications},
  volume={9},
  number={12},
  pages={1791--1801},
  year={2015},
  publisher={Wiley Online Library}
}

@article{dual2017,
  title={Dual-loop iterative optimal control for the finite horizon LQR problem with unknown dynamics},
  author={Fong, Justin and Tan, Ying and Crocher, Vincent and Oetomo, Denny and Mareels, Iven},
  journal={Systems \& Control Letters},
  volume={111},
  pages={49--57},
  year={2018},
  publisher={Elsevier}
}

@inproceedings{singular,
  title={Singular Perturbation-based Reinforcement Learning of Two-Point Boundary Optimal Control Systems},
  author={Reddy, Vasanth and Eldardiry, Hoda and Boker, Almuatazbellah},
  booktitle={2022 American Control Conference (ACC)},
  pages={3323--3328},
  year={2022},
  organization={IEEE}
}

@article{TAC2021,
  title={Iterative online optimal feedback control},
  author={Chen, Yuqing and Braun, David J},
  journal={IEEE Transactions on Automatic Control},
  volume={66},
  number={2},
  pages={566--580},
  year={2020},
  publisher={IEEE}
}

@article{symRLC,
  title={Passive and reciprocal networks: From simple models to simple optimal controllers},
  author={Pates, Richard},
  journal={IEEE Control Systems Magazine},
  volume={42},
  number={3},
  pages={73--92},
  year={2022},
  publisher={IEEE}
}

@article{symRLC2,
  title={Modeling of three-phase dynamic systems using complex transfer functions and transfer matrices},
  author={Harnefors, Lennart},
  journal={IEEE Transactions on Industrial Electronics},
  volume={54},
  number={4},
  pages={2239--2248},
  year={2007},
  publisher={IEEE}
}

@book{plants,
  title={Efficient modeling and control of large-scale systems},
  author={Mohammadpour, Javad and Grigoriadis, Karolos M},
  year={2010},
  publisher={Springer Science \& Business Media}
}

@inproceedings{symdata,
  title={Data-driven optimal structured control for unknown symmetric systems},
  author={Massenio, Paolo R and Rizzello, Gianluca and Naso, David and Lewis, Frank L and Davoudi, Ali},
  booktitle={2020 IEEE 16th International Conference on Automation Science and Engineering (CASE)},
  pages={179--184},
  year={2020},
  organization={IEEE}
}

@inproceedings{symech,
  title={Integrated design of system parameters, control and sensor/actuator placement for symmetric mechanical systems},
  author={Hiramoto, Kazuhiko and Mohammadpour, Javad and Grigoriadis, Karolos M},
  booktitle={Proceedings of the 48h IEEE Conference on Decision and Control (CDC) held jointly with 2009 28th Chinese Control Conference},
  pages={2855--2860},
  year={2009},
  organization={IEEE}
}

@book{lewis,
  title={Optimal control},
  author={Lewis, Frank L and Vrabie, Draguna and Syrmos, Vassilis L},
  year={2012},
  publisher={John Wiley \& Sons}
}

@article{ILCaux,
  title={Multivariable norm optimal iterative learning control with auxiliary optimisation},
  author={Owens, David H and Freeman, Chris T and Chu, Bing},
  journal={International Journal of Control},
  volume={86},
  number={6},
  pages={1026--1045},
  year={2013},
  publisher={Taylor \& Francis}
}

@inproceedings{bingchu,
  title={Iterative learning control for performance optimisation},
  author={Chu, Bing},
  booktitle={2016 American Control Conference (ACC)},
  pages={2129--2134},
  year={2016},
  organization={IEEE}
}

@article{apkarian2022mixed,
  title={Mixed $L_1/H_{\infty}$-synthesis for $ L_{\infty}$-stability},
  author={Apkarian, Pierre and Noll, Dominikus},
  journal={arXiv: 2201.01059},
  year={2022}
}

@article{depresis,
  title={Designing experiments for data-driven control of nonlinear systems},
  author={De Persis, Claudio and Tesi, Pietro},
  journal={IFAC-PapersOnLine},
  volume={54},
  number={9},
  pages={285--290},
  year={2021},
  publisher={Elsevier}
}

@article{hamiltonLQR,
  title={Criterion for the convergence of the solution of the Riccati differential equation},
  author={Callier, F and Willems, J},
  journal={IEEE Transactions on Automatic Control},
  volume={26},
  number={6},
  pages={1232--1242},
  year={1981},
  publisher={IEEE}
}

@article{kouba2020,
  title={What is the adjoint of a linear system?[lecture notes]},
  author={Kouba, Omran and Bernstein, Dennis S},
  journal={IEEE Control Systems Magazine},
  volume={40},
  number={3},
  pages={62--70},
  year={2020},
  publisher={IEEE}
}

@inproceedings{Pat2019,
  title={On the optimal control of relaxation systems},
  author={Pates, Richard and Bergeling, Carolina and Rantzer, Anders},
  booktitle={2019 IEEE 58th conference on decision and control (CDC)},
  pages={6068--6073},
  year={2019},
  organization={IEEE}
}

@book{pont,
  title={Mathematical theory of optimal processes},
  author={Pontryagin, Lev Semenovich},
  year={2018},
  publisher={Routledge}
}

@inproceedings{faz2018,
  title={Global convergence of policy gradient methods for the linear quadratic regulator},
  author={Fazel, Maryam and Ge, Rong and Kakade, Sham and Mesbahi, Mehran},
  booktitle={International conference on machine learning},
  pages={1467--1476},
  year={2018},
  organization={PMLR}
}

@article{deper2019,
  title={Formulas for data-driven control: Stabilization, optimality, and robustness},
  author={De Persis, Claudio and Tesi, Pietro},
  journal={IEEE Transactions on Automatic Control},
  volume={65},
  number={3},
  pages={909--924},
  year={2019},
  publisher={IEEE}
}

@article{lopez2023efficient,
  title={Efficient off-policy Q-learning for data-based discrete-time LQR problems},
  author={Lopez, Victor G and Alsalti, Mohammad and M{\"u}ller, Matthias A},
  journal={IEEE Transactions on Automatic Control},
  year={2023},
  publisher={IEEE}
}

@book{ohiobook,
  title={Linear state-space control systems},
  author={Williams, Robert L and Lawrence, Douglas A and others},
  year={2007},
  publisher={John Wiley \& Sons}
}
\end{document}